\let\Gamma\varGamma
\patchcmd{\section}{\scshape}{\bfseries}{}{}
\renewcommand{\@secnumfont}{\bfseries}
\newcommand{\Rb}{\mathbb{R}}
\newcommand{\Lscr}{\mathscr{L}}
\newcommand{\Sym}{\mathrm{Sym}}
\newcommand{\A}{\mathrm{A}}
\newcommand{\Hv}{\vv{H}}
\DeclareMathOperator{\tr}{\mathrm{tr}}
\DeclareMathOperator*{\rank}{\mathrm{rank}}
\newcommand{\gMetric}{\mathrm{g}}
\DeclareMathOperator{\dd}{d\hspace{-3pt}}
\DeclareMathOperator{\Ric}{Ric}
\newcommand{\RMN}{\mathrm{R}_{M\times N}}
\newcommand{\RM}{\mathrm{R}_M}
\newcommand{\RN}{\mathrm{R}_N}
\newcommand{\Rind}{\mathrm{R}}
\newcommand{\gM}{\gMetric_M}
\newcommand{\gN}{\gMetric_N}
\newcommand{\gMN}{\gMetric_{M\times N}}
\newcommand{\gind}{\gMetric}
\newcommand{\sTensor}{\mathrm{s}}
\newcommand{\sMN}{\sTensor_{M\times N}}
\newcommand{\sind}{\sTensor}
\newcommand{\svdf}{\lambda}			
\newtheorem{mainthm}{Theorem}
\newtheorem{maincor}[mainthm]{Corollary}
\newtheorem{theorem}{Theorem}
\newtheorem{lemma}[theorem]{Lemma}
\newtheorem{corollary}[theorem]{Corollary}
\newtheorem{remark}[theorem]{Remark}
\newtheorem*{remark*}{Remark}
\newtheorem{definition}[theorem]{Definition}
\newtheorem*{definition*}{Definition}
\newtheorem{example}[theorem]{Example}
\numberwithin{equation}{section}
\numberwithin{theorem}{section}
\title[Minimal Maps with Small Second Fundamental Form]{A Bernstein Theorem for Minimal Maps\\with small Second Fundamental Form}
\author{Felix Lubbe}
\address{Department of Mathematics, University of Hamburg, Bundesstr.\ 55, D--20146 Hamburg, Germany}
\email{Felix.Lubbe@uni-hamburg.de}
\keywords{Minimal maps, Bernstein theorem}
\subjclass[2010]{Primary 53C42; 53C40, 58J05}
\begin{document}

	\begin{abstract}
		We consider minimal maps $f:M\to N$ between Riemannian manifolds 
		$(M,\mathrm{g}_M)$ and $(N,\mathrm{g}_N)$, where $M$ is compact and 
		where the sectional curvatures satisfy $\sec_N\le \sigma\le \sec_M$ for 
		some $\sigma>0$. Under certain assumptions on the differential of the
		map and the second fundamental form of the graph $\Gamma(f)$ of $f$, we show 
		that $f$ is either the constant map or a totally geodesic isometric immersion.
	\end{abstract}

	\maketitle
	\setcounter{tocdepth}{1}
	\tableofcontents
	
	\section{Introduction}
	
	\setlength{\mathindent}{1.5cm} 
	
	We consider smooth minimal maps $f:M\to N$ between Riemannian
	manifolds $(M,\gM)$ and $(N,\gN)$. The map $f$
	is called \emph{minimal}, if its graph
	\[
		\Gamma(f) \coloneqq \bigl\{ (x,f(x)) : x\in M \bigr\} \qquad \subset M\times N
	\]
	is a minimal submanifold of $\bigl( M\times N, \gM\times\gN \bigr)$ \cite{Sch93}.
	The Bernstein theorem asserts that any complete minimal surface in $\Rb^3$ which can
	be written as a graph of a function on $\Rb^2$ must be a plane. This result
	has been generalized to $\Rb^n$ for $n\le 7$ and to general dimensions
	under various growth conditions (see e.\,g.\ \cite{EH90} and references therein). \\
	
	A generalized Bernstein problem is to ask under which additional geometric conditions
	the graph $\Gamma(f)$ is totally geodesic. Several results for higher codimension have
	been obtained, e.\,g.\ by assuming conditions on the slope of the graph
	and by considering its Gauss map \cite{FC80,HJW80,JX99,SWX06,JXY13,JXY16,Wang03}. \\
	
	Let us assume that $M$ is compact.
	A further approach to the problem is to demand a volume-decreasing condition for the map $f$.
	The map $f$ is called \emph{weakly length-decreasing} if
	\[
		\|\dd f(v)\|_{\gN} \le \|v\|_{\gM} \qquad \text{for all} \quad v\in\Gamma(TM)
	\]
	and \emph{weakly area-decreasing} if
	\[
		\|\dd f(v)\wedge \dd f(w)\|_{\gN} \le \|v\wedge w\|_{\gM} \qquad \text{for all} \quad v,w\in\Gamma(TM) \,.
	\]
	The length-decreasing property may be expressed in terms of the symmetric
	tensor
	\[
		\sind \coloneqq \gM - f^*\gN \,,
	\]
	i.\,e.\ $f$ is weakly length-decreasing precisely if $\sind$ is non-negative definite.
	This tensor satisfies a nice differential equation, 
	and allows the application
	of a tensorial maximum principle (see \cite{SHS13}, also for the area-decreasing case). \\
	
	Instead of considering the above tensor and its eigenvalues directly,
	we will study the behavior of minimal maps in relation to 
	the trace
	\[
		\tr(\sind) \coloneqq \sum_{k=1}^m \sind(e_k,e_k) \,,
	\]
	where $\{e_1,\dotsc,e_m\}$ is a local frame of $TM$, orthonormal with respect to the metric $\gind$
	induced by the embedding $\Gamma(f)\subset M\times N$. We remark
	that $\frac{1}{m}\tr(\sind)$ is the mean value of the eigenvalues of $\sind$,
	i.\,e.\ the trace also may be seen as an average.
	After these preparations, we are able to state the main result.
	
	\begin{mainthm}
		\label{thm:ThmA}
		Let $(M,\gM)$ and $(N,\gN)$ be Riemannian manifolds and
		suppose $M$ to be compact of dimension $\dim M \ge 2$.
		Assume that there exists $\sigma > 0$,  
		such that the sectional curvatures $\sec_{M}$ of $M$ 
		and $\sec_{N}$ of $N$ satisfy the relation
		\[
			\sec_{N} \le \sigma \le \sec_{M} \,.
		\]
		Let $f:M\to N$ be a minimal map and assume that the tensor $\sind$ satisfies
		the condition%
		\begin{equation}
			\label{eq:trCon}
			\tr(\sind) \ge 0 \,.
		\end{equation}
		Further, assume there is a finite constant $\kappa$, such that
		\[
			\kappa^{2} > 1 \qquad \text{and} \qquad f^{*}\gN < \kappa^{2} \gM
		\]
		and such that
		the second fundamental tensor $\A$ of the graph satisfies the bound
		\begin{equation}
			\label{eq:sCondition4}
			\|\A\|^{2} \le \frac{\kappa^{2} \sigma}{\kappa^{4}-1} \tr(\sind) \,.
		\end{equation}
		Then one of the following holds.
		\begin{enumerate}[label=(\roman*)]
			\item $f$ is the constant map.
			\item\label{it:thmAii} $f$ is a totally geodesic isometric immersion,
				$\sec_M=\sigma$
			    and the restriction of $\sec_N$
				to $\dd f(TM)$ is equal to $\sigma$. Moreover, $\Gamma(f)$ is a totally
				geodesic submanifold of $M\times N$.
		\end{enumerate}
	\end{mainthm}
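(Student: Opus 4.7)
The plan is to turn hypothesis~\eqref{eq:sCondition4} into a pointwise elliptic inequality for $\tr(\sind)$, viewed via the identification $M\cong\Gamma(f)$ as a smooth function on the minimal graph, and then to exploit the compactness of $M$ through the strong maximum principle. Building on Simons/Bochner-type formulas for the tensor $\sind$ on a minimal graph and tracing with respect to the induced metric $\gind$, I would first derive a pointwise identity of the schematic form
\[
\tfrac{1}{2}\Delta\tr(\sind)\;=\;Q_{\mathrm{curv}}\;+\;Q_{\A}\,,
\]
where $Q_{\mathrm{curv}}$ involves the ambient curvatures $\sec_{M}$ and $\sec_{N}$ on planes spanned by the singular directions of $\dd f$, and $Q_{\A}$ is a quadratic expression in the components of $\A$ with coefficients depending on the singular values $\svdf_1,\dotsc,\svdf_m$ of $\dd f$.

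Under the curvature pinching $\sec_{N}\le\sigma\le\sec_{M}$ and the slope bound $\svdf_i^{2}<\kappa^{2}$, the term $Q_{\mathrm{curv}}$ admits a lower bound that is a positive multiple of $\sigma\,\tr(\sind)$ with a $\kappa$-dependent constant, while $Q_{\A}$ admits a lower bound proportional to $-\|\A\|^{2}$, again with a $\kappa$-dependent constant. The sharp balance between these two constants is exactly the content of~\eqref{eq:sCondition4}, which therefore yields $\Delta\tr(\sind)\ge 0$. Since $M$ is compact and $\tr(\sind)\ge 0$ by~\eqref{eq:trCon}, the strong maximum principle forces $\tr(\sind)$ to be a non-negative constant, and every intermediate inequality to hold as a pointwise equality.

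The proof concludes with a rigidity analysis. Saturation of~\eqref{eq:sCondition4} in combination with equality in the $Q_{\A}$-estimate should force $\A\equiv 0$, so that $\Gamma(f)$ is totally geodesic in $M\times N$. Equality in the $Q_{\mathrm{curv}}$-estimate then constrains the singular values of $\dd f$ and the ambient sectional curvatures, and I expect the resulting algebraic system to admit only the two configurations $\svdf_i\equiv 0$---case~(i), $f$ is the constant map---or $\svdf_i\equiv 1$ with $\sec_{M}\equiv\sigma$ on the relevant planes and $\sec_{N}\equiv\sigma$ on $\dd f(TM)$---case~(ii), $f$ is a totally geodesic isometric immersion.

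The main obstacle is the symmetric-function bookkeeping needed to extract the sharp $\kappa$-dependent constants in the lower bounds for $Q_{\mathrm{curv}}$ and $Q_{\A}$ with precisely the ratio demanded by~\eqref{eq:sCondition4}: the $\svdf_i^{2}$-dependent weights must be controlled simultaneously, using both the pinching $\svdf_i^{2}<\kappa^{2}$ and the trace hypothesis~\eqref{eq:trCon} in an essential way. A secondary but still delicate step is the dichotomy in the equality case, where one has to exclude mixed configurations of singular values that would be compatible with all saturated estimates and with $\tr(\sind)$ being constant.
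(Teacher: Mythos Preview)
Your strategy is genuinely different from the paper's, and the gap you yourself flag is real, not merely bookkeeping.

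The paper does \emph{not} work with the scalar $\tr(\sind)$. Instead it applies Hamilton's tensorial maximum principle to the shifted tensor
\[
\Phi_{c}\;=\;\sind-\frac{1-c}{1+c}\,\gind\,,\qquad c=\lambda_0^{2}\coloneqq\max_{x\in M}\lambda_m^{2}(x)\,,
\]
which is non-negative by construction and has a null-eigenvector precisely in the direction $e_m$ of the largest singular value. The point of this device is that when one evaluates $(\Delta\Phi_{\lambda_0^2})(v,v)$ on a null-eigenvector $v$, the intrinsic Ricci cross-term $\Phi_{\lambda_0^2}(\Ric v,v)$ vanishes automatically, and the remaining curvature and second-fundamental-form contributions organise themselves exactly into
\[
(\lambda_0^{2}-1)\|\A\|^{2}\;-\;\frac{\lambda_0^{2}}{1+\lambda_0^{2}}\,\sigma\,\tr(\sind)\,,
\]
which is non-positive precisely under~\eqref{eq:sCondition4} and $\lambda_0^{2}<\kappa^{2}$ (Lemmas~4.1--4.4 in the paper). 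This is where the sharp constant $\kappa^{2}/(\kappa^{4}-1)$ comes from: it reflects the monotonicity of $t\mapsto t/(t^{2}-1)$ evaluated at the \emph{extremal} singular value.

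Your traced scalar identity cannot exploit this mechanism. After tracing, the term $\sum_{l}\sind(\Ric e_l,e_l)$ survives; unfolding it via the Gauss equation reintroduces mixed products of the eigenvalues $\mu_k=\frac{1-\lambda_k^{2}}{1+\lambda_k^{2}}$ with $\|\A(e_k,e_l)\|^{2}$ and with the ambient curvatures, and these weights are not all controlled by $\kappa$ alone. There is no evident reason why the resulting inequality should close up with the \emph{same} constant as in~\eqref{eq:sCondition4}; one expects at best a weaker theorem with a more restrictive bound on $\|\A\|^{2}$. In other words, the ``sharp balance'' you assert is exactly what the tensorial approach is designed to produce, and it is not clear it survives averaging.

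A second, related gap is in your rigidity step. Constancy of $\tr(\sind)$ does not by itself force the singular values to be individually constant, let alone equal to $0$ or $1$; the paper obtains $\lambda_m^{2}(x)\equiv\lambda_0^{2}$ everywhere from the tensorial strong maximum principle, and then extracts $\lambda_0^{2}=1$ and $\tr(\sind)=0$ from the second-derivative criterion applied to $\Phi_{\lambda_0^{2}}$ in the null direction. Your scheme would need an independent argument ruling out ``mixed'' configurations with $\tr(\sind)$ constant but singular values varying---a difficulty you correctly identify but do not resolve.
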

		
	If the target space has at most half the dimension of the domain,
	the condition on the trace is automatically satisfied, i.\,e.\
	the following statement holds.
		
	\begin{maincor}
		\label{cor:CorB}
		Let $(M,\gM)$ be compact of
		dimension $m=\dim M\ge 2$, 
		let $(N,\gN)$ be of dimension at most $\frac{m}{2}$,
		and assume $\sec_N\le \sigma \le \sec_M$ for some $\sigma>0$. 
		If a minimal map $f:M\to N$
		satisfies the condition \eqref{eq:sCondition4},
		then $f$ is the constant map.
	\end{maincor}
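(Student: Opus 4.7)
\textbf{Proof plan for Corollary \ref{cor:CorB}.} The strategy is to verify the hypothesis \eqref{eq:trCon} automatically from the dimension assumption and then invoke Theorem \ref{thm:ThmA}, ruling out case (ii) by a dimension count.

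First I would compute $\tr(\sind)$ explicitly in terms of the singular values of $\dd f$. Let $n=\dim N$ and, at a given point $x\in M$, choose a $\gM$-orthonormal frame $\{v_1,\dotsc,v_m\}$ of $T_xM$ that diagonalizes $f^{*}\gN$, with associated singular values $\lambda_1,\dotsc,\lambda_m\ge 0$. Then $\gind(v_i,v_j)=(1+\lambda_i^2)\delta_{ij}$, so the rescaled frame $e_k\coloneqq v_k/\sqrt{1+\lambda_k^2}$ is $\gind$-orthonormal, and a direct calculation gives
\[
	\tr(\sind)=\sum_{k=1}^{m}\frac{1-\lambda_k^2}{1+\lambda_k^2}.
\]
Since $\rank(\dd f_x)\le n$, at most $n$ of the $\lambda_k$ are nonzero; for those, the corresponding summand is bounded below by $-1$, while the remaining $m-n$ summands equal $+1$. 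This yields
\[
	\tr(\sind)\ge (m-n)-n=m-2n\ge 0,
\]
using the hypothesis $n\le m/2$. Hence condition \eqref{eq:trCon} is automatic, and all remaining hypotheses of Theorem \ref{thm:ThmA} are in force by assumption.

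It remains to exclude alternative (ii) of Theorem \ref{thm:ThmA}. If $f$ were a totally geodesic isometric immersion, one would need $\dim M\le\dim N$, i.e.\ $m\le n\le m/2$, contradicting $m\ge 2$. Therefore only alternative (i) can occur, so $f$ is the constant map.

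I expect no real obstacle: the only non-routine ingredient is the singular value computation of $\tr(\sind)$, and that is a short linear algebra exercise once one notes that a $\gind$-orthonormal frame is obtained from a simultaneously diagonalizing $\gM$-orthonormal frame by rescaling each vector by $(1+\lambda_k^2)^{-1/2}$.
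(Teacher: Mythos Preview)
Your proof is correct and follows essentially the same route as the paper: compute $\tr(\sind)$ via the singular value decomposition, obtain $\tr(\sind)\ge m-2n\ge 0$, apply Theorem~\ref{thm:ThmA}, and exclude case~(ii). The only cosmetic difference is that the paper observes each nonzero summand satisfies $\frac{1-\lambda_k^2}{1+\lambda_k^2}>-1$ strictly, giving $\tr(\sind)>0$ and thereby ruling out case~(ii) directly (since that case forces $\tr(\sind)=0$), whereas you rule it out by the dimension count $m\le n\le m/2$; both arguments are equally short and valid.
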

		
	Let us shortly describe the structure of the paper.
	In section \ref{sec:SEMP}, we recall the strong elliptic maximum
	principle in vector bundles \cite{SHS13}, as required by the subsequent sections.
	Section \ref{sec:Maps} describes the geometry of graphs, and
	section \ref{sec:Proofs} provides the proof of theorem \ref{thm:ThmA}. 
	The paper ends with a short discussion of the assumptions 
	in section \ref{sec:Discussion}. \\
			
	\textbf{Acknowledgements.}
	I am grateful to
	Andreas Savas-Halilaj for 
	valuable discussions. This research was initiated while I was 
	supported 
	by the Research Training Group 1463 of the DFG 
	at the Leibniz Universit\"{a}t Hannover.

	\section{The Strong Elliptic Maximum Principle}
	\label{sec:SEMP}
	
	In \cite{SHS13},  Savas-Halilaj and Smoczyk provided a strong
	elliptic maximum principle for vector bundles, which we would
	like to apply. We briefly collect the results relevant for us.
	All manifolds will be smooth and connected without boundary. \\
	
	Let $(E,\pi,M)$ be a vector bundle of rank $k$ over a smooth
	manifold $M$. Suppose $\gMetric_E$ is a bundle metric on $E$
	and that $\nabla$ is a metric connection on $E$. A uniformly
	elliptic operator $\Lscr$ on $\Gamma(E)$ of second order is
	locally given by
	\[
		\Lscr = \sum_{i,j=1}^m a^{ij} \nabla_{e_i,e_j}^2 + \sum_{j=1}^m b^j \nabla_{e_j} \,,
	\]
	where $a\in\Gamma(TM\otimes TM)$ is a symmetric, uniformly positive definite
	tensor and $b\in\Gamma(TM)$ is a smooth vector field, such that
	\[
		a = \sum_{i,j=1}^m a^{ij} e_i\otimes e_j \qquad \text{and} \qquad b = \sum_{j=1}^m b^j e_j
	\]
	in a local frame field $\{e_1,\dotsc,e_m\}$ of $TM$. \\
	
	The following definition goes back to Hamilton \cite[Sec.\ 9]{Ham82}.
	
	\begin{definition}
		A fiberwise map $\Psi:\Sym(E^*\otimes E^*) \to \Sym(E^*\otimes E^*)$
		is said to satisfy the null-eigenvector condition, if whenever $\vartheta$
		is a non-negative symmetric $2$-tensor at a point $x\in M$ and if $v\in T_xM$
		is a null-eigenvector of $\vartheta$, then $\Psi(\vartheta)(v,v)\ge 0$.
	\end{definition}
	
	The elliptic analogue of Hamilton's maximum principle \cite{Ham86}*{Lemma 8.2}
	is given by the next theorem.
	
	\begin{theorem}[{\cite{SHS13}*{Theorem 2.3}}]
		\label{thm:SMP}
		Let $(M,\gM)$ be a Riemannian manifold and let $(E,\pi,M)$ be a Riemannian
		vector bundle over the manifold $M$ equipped with a metric connection.
		Suppose that $\phi\in\Sym(E^*\otimes E^*)$ is non-negative definite and
		satisfies
		\[
			\Lscr \phi + \Psi(\phi) = 0 \,,
		\]
		where $\Psi$ is a smooth fiberwise map satisfying the null-eigenvector condition.
		If there is an interior point of $M$ where $\phi$ has a null-eigenvalue, then
		$\phi$ must have a null-eigenvalue everywhere.
	\end{theorem}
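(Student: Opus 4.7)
\emph{Strategy.} The plan is a connectedness argument. Define
\[
Z \coloneqq \bigl\{ x\in M : \phi_x \text{ admits a non-zero null-eigenvector} \bigr\} \,.
\]
The hypothesis asserts that $Z$ contains an interior point, and the conclusion is that $Z=M$. Since $M$ is connected, it suffices to show that $Z$ is both closed and open. Closedness is immediate from continuity of the eigenvalues of $\phi$ as functions on $M$, so the heart of the proof is to establish the openness of $Z$ at an arbitrary interior point $x_0\in Z$.

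\emph{A scalar companion to $\phi$.} Fix a unit null-eigenvector $v_0\in E_{x_0}$, so $\phi_{x_0}(v_0,v_0)=0$. On a small normal coordinate neighbourhood $U\ni x_0$ I would extend $v_0$ to a local section $V$ of $E$ by parallel transport of $v_0$ along the radial geodesics emanating from $x_0$; this gives $V(x_0)=v_0$ and $(\nabla V)(x_0)=0$. Consider the scalar function
\[
u \colon U\to\Rb, \qquad u(x) \coloneqq \phi_x\bigl( V(x),V(x) \bigr) \,.
\]
Then $u\ge 0$ (since $\phi\ge 0$ fibrewise) and $u(x_0)=0$, so $u$ attains an interior minimum at $x_0$. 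A direct computation, applying the product rule and substituting $\Lscr\phi=-\Psi(\phi)$, produces the schematic identity
\[
\Lscr u = -\Psi(\phi)(V,V) + 2\phi(V,\Lscr V) + 4\sum_{i,j}a^{ij}(\nabla_{e_i}\phi)(V,\nabla_{e_j}V) + 2\sum_{i,j}a^{ij}\phi(\nabla_{e_i}V,\nabla_{e_j}V) \,.
\]
The three correction terms on the right-hand side all involve $\phi$, and via the Cauchy-Schwarz inequality $\phi(X,Y)^{2}\le\phi(X,X)\phi(Y,Y)$ for positive semi-definite $\phi$ they can be absorbed into a bound of the form $c_1 u + c_2|\nabla u|$ on the pre-compact $U$.

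\emph{The main obstacle and the conclusion.} The principal difficulty is the zeroth-order term $-\Psi(\phi)(V,V)$: at $x_0$ the null-eigenvector condition gives $\Psi(\phi)(V,V)(x_0)\ge 0$, but at neighbouring points $V(x)$ is generally not a null-eigenvector of $\phi_x$ and this term can a priori have either sign. To handle it I would Taylor-expand $\Psi(\phi)(V,V)$ around $x_0$, decomposing $V(x)$ into its orthogonal projection onto $\Ker\phi_x$ and its complementary part: by smoothness of $\Psi$ and the null-eigenvector hypothesis the leading contribution is non-negative, while the complementary part has squared length comparable to $u$, so the remainder falls under a bound of the form $c_1 u + c_2|\nabla u|$. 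Combining this with the preceding step produces a scalar differential inequality
\[
\Lscr u \le c_1 u + c_2 |\nabla u|
\]
on a ball around $x_0$. Since $u\ge 0$ attains its minimum value zero at the interior point $x_0$, Hopf's classical strong minimum principle for scalar uniformly elliptic operators forces $u\equiv 0$ on that ball, so $V(x)$ is a null-eigenvector of $\phi_x$ at every point of the ball and the ball lies in $Z$. This shows that $Z$ is open and completes the argument.
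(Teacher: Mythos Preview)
The paper does not supply its own proof of this statement; it is quoted verbatim from \cite{SHS13}*{Theorem 2.3} and used as a black box. So there is nothing in the present paper to compare your argument against.

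That said, your proposal contains a genuine gap at the step you yourself flag as the ``main obstacle''. You propose to control the sign of $\Psi(\phi)(V,V)$ near $x_0$ by decomposing $V(x)$ into its projection onto $\Ker\phi_x$ and a complementary part. This is circular: for $x\neq x_0$ you have not yet established that $\Ker\phi_x$ is non-trivial---that is precisely what you are trying to prove---so the projection may well be zero and the ``complementary part'' all of $V(x)$. More fundamentally, the null-eigenvector condition only constrains $\Psi(\vartheta)(v,v)$ when $v$ is an \emph{actual} null-eigenvector of $\vartheta$; it says nothing about vectors with small positive $\phi$-value, so neither smoothness of $\Psi$ nor a Taylor expansion lets you extract a sign on $\Psi(\phi_x)(V(x),V(x))$ for $x$ near $x_0$. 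The asserted bound $-\Psi(\phi)(V,V)\le c_1 u + c_2|\nabla u|$ therefore does not follow from the hypotheses as stated.

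The standard route around this (as in Hamilton's original arguments and in \cite{SHS13}) is to work not with a fixed parallel section $V$ but with the smallest eigenvalue $\lambda_1(x)=\min_{|v|=1}\phi_x(v,v)$ itself. This function is only Lipschitz, but as an infimum of smooth functions it satisfies the required differential inequality in the viscosity (or barrier) sense: at any point where a smooth lower support $\psi\le\lambda_1$ touches $\lambda_1$ from below, the touching vector \emph{is} a null-eigenvector of $\phi-\psi\gMetric_E$, and the null-eigenvector condition then applies directly. Hopf's strong minimum principle for viscosity supersolutions closes the argument. Your scalar-reduction idea is in the right spirit, but the choice of test function must track the actual minimising direction rather than a fixed parallel frame.
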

	
	For $\phi\in\Sym(E^*\otimes E^*)$ a real number $\lambda$ is called
	\emph{eigenvalue} of $\phi$ with respect to $\gMetric_E$ at the point
	$x\in M$, if there  exists a non-zero vector $v\in E_x\coloneqq \pi^{-1}(x)$, such that
	\[
		\phi(v,w) = \lambda \gMetric_E(v,w) \,,
	\]
	for any $w\in E_x$. Since the tensor $\phi$ is symmetric, it admits
	$k$ real eigenvalues $\lambda_1(x),\dotsc,\lambda_k(x)$ at each point $x\in M$.
	We will always arrange the eigenvalues, such that
	\[
		\lambda_1(x)\le\dotsb\le \lambda_k(x)\,.
	\]
	
	The following result is also due to Hamilton \cite{Ham82} (see also \cite{SHS13}*{Theorem 2.4}).
		
	\begin{theorem}[Second Derivative Criterion]
		\label{thm:SDC}
		Suppose that $(M,\gM)$ is a Riemannian manifold
		and $(E,\pi,M)$ a Riemannian vector bundle of rank $k$ over $M$ equipped
		with a metric connection $\nabla$. Let $\phi\in\Sym(E^*\otimes E^*)$
		be a smooth symmetric $2$-tensor. If the largest eigenvalue $\lambda_k$
		of $\phi$ admits a local maximum $\lambda$ at an interior point $x_0\in M$, then
		\[
			(\nabla \phi)(v,v) = 0 \qquad \text{and} \qquad (\Lscr\phi)(v,v) \le 0 \,,
		\]
		for all vector $v$ in the eigenspace 
		\[
			\{ v \in E_{x_0} : \phi(v,w) = \lambda \gMetric_E(v,w) \;\; \text{for all} \;\; w\in E_{x_0} \}
		\]
		of $\lambda$ at $x_0$ and for all uniformly
		elliptic second order operators $\Lscr$.
	\end{theorem}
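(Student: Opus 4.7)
The standard obstacle here is that the largest eigenvalue $\lambda_k$ of a symmetric tensor field is only Lipschitz in general, so one cannot differentiate it directly. The plan is to replace $\lambda_k$ by a smooth surrogate built from a parallel extension of the eigenvector $v$, and then carefully track how derivatives of the surrogate relate to derivatives of $\phi$ at~$x_0$.

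First, I would fix a unit vector $v$ in the eigenspace of $\lambda$ (scaling at the end), and extend it to a local section $V$ of $E$ on a neighbourhood of $x_0$ by parallel transport along radial geodesics emanating from $x_0$. This produces a smooth local section with $V(x_0)=v$, pointwise unit norm $\|V\|_{\gMetric_E}=1$, and $(\nabla V)|_{x_0}=0$ in every direction. Then I introduce the smooth scalar function
\[
  \varphi(x) \coloneqq \phi_x\bigl(V(x),V(x)\bigr).
\]
By the Rayleigh quotient characterisation of $\lambda_k$ and the normalisation $\|V\|_{\gMetric_E}=1$, one has $\varphi(x)\le \lambda_k(x)\le \lambda = \varphi(x_0)$ near $x_0$, so $\varphi$ attains a local maximum at $x_0$.

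Next I would extract the two conclusions from the scalar second-derivative test applied to $\varphi$. Using $(\nabla V)|_{x_0}=0$, the first-order condition $\mathrm{d}\varphi|_{x_0}=0$ expands to $(\nabla_e\phi)(v,v)=0$ for every $e\in T_{x_0}M$, i.e. $(\nabla\phi)(v,v)=0$. For the second-order information, I would pick a local frame $\{e_i\}$ of $TM$ that is normal at $x_0$, i.e.\ $(\nabla_{e_i}e_j)|_{x_0}=0$, so that $\nabla^2_{e_i,e_j}=\nabla_{e_i}\nabla_{e_j}$ at $x_0$ on any tensor. Expanding $\nabla^2_{e_i,e_j}\varphi$ at $x_0$ produces three types of terms: $(\nabla^2_{e_i,e_j}\phi)(v,v)$, terms containing $\nabla V$ which vanish at $x_0$, and a residual term $2\phi(\nabla^2_{e_i,e_j}V,v)$. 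Twice differentiating the identity $\|V\|_{\gMetric_E}^2\equiv 1$ at $x_0$ shows $\langle\nabla^2_{e_i,e_j}V,v\rangle=0$, and since $v$ is an eigenvector of $\phi|_{x_0}$, this residual term vanishes as well. Hence
\[
  \bigl(\nabla^2_{e_i,e_j}\varphi\bigr)(x_0) = \bigl(\nabla^2_{e_i,e_j}\phi\bigr)(v,v).
\]

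Finally, I would invoke that $\mathrm{Hess}(\varphi)|_{x_0}$ is negative semidefinite and that $(a^{ij})$ is positive definite; contracting gives $\sum_{ij}a^{ij}(\nabla^2_{e_i,e_j}\phi)(v,v)\le 0$. Combined with the already-established vanishing $(\nabla_{e_j}\phi)(v,v)=0$ which eliminates the first-order part $\sum_j b^j(\nabla_{e_j}\phi)$ of $\Lscr$, this yields $(\Lscr\phi)(v,v)\le 0$. The conclusion holds for any $v$ in the eigenspace by applying the argument to each such $v$ in turn. The only genuinely delicate point is the choice of extension $V$: it must have unit norm \emph{and} vanishing first covariant derivative at $x_0$, since both facts are simultaneously needed to kill the auxiliary $\nabla V$ and $\nabla^2 V$ contributions and isolate $(\nabla^2\phi)(v,v)$ in the Hessian of $\varphi$.
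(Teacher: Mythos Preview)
The paper does not actually prove this theorem: it is stated with attribution to Hamilton \cite{Ham82} and a pointer to \cite{SHS13}*{Theorem 2.4}, without any argument given. So there is no ``paper's own proof'' to compare against.

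Your argument is correct and is precisely the standard proof found in those references. The key ingredients --- the radial parallel extension $V$ (giving $\|V\|_{\gMetric_E}\equiv 1$ and $(\nabla V)|_{x_0}=0$ simultaneously, since the connection is metric), the Rayleigh inequality $\varphi\le\lambda_k$, and the observation that the eigenvector property $\phi(\cdot,v)=\lambda\,\gMetric_E(\cdot,v)$ together with $\langle\nabla^2_{e_i,e_j}V,v\rangle=0$ kills the residual second-order term --- are exactly the ones used in the cited sources. The only small point worth making explicit is that contracting a negative semidefinite Hessian against a positive definite $(a^{ij})$ indeed yields a nonpositive trace; this is elementary linear algebra (write $a=C^tC$), but it is the place where uniform ellipticity enters.
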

			
	\section{Maps between Manifolds}
	\label{sec:Maps}
	
	\subsection{Geometry of Graphs}
	We recall the geometric quantities in a graphical
	setting, where we follow the presentation in \cite[Section 3.1]{SHS13}. \\

	Let $(M,\gM)$ and $(N,\gN)$ be Riemannian
	manifolds of dimensions $m$ and $n$, respectively. The induced
	metric on the product manifold will be denoted by
	\[
		\gMN = \gM\times \gN\,.
	\]
	A smooth map $f:M\to N$ defines an embedding $F:M\to M\times N$, by
	\[
		F(x) = \big(x,f(x)\big)\,,\qquad x\in M\,.
	\]
	The graph of $f$ is defined to be the submanifolds $\Gamma(f) \coloneqq F(M)$.
	Since $F$ is an embedding, it induces another Riemannian metric
	$\gMetric \coloneqq F^{*}\gMN$ on $M$. The two natural
	projections
	\[
		\pi_{M} : M\times N \to M\,,\qquad \pi_{N}:M\times N\to N\,,
	\]
	are submersions, that is they are smooth and have maximal rank. Note
	that the tangent bundle of the product manifold $M\times N$ splits
	as a direct sum
	\[
		T(M\times N) = TM \oplus TN\,.
	\]
	The four metrics $\gM,\gN,\gMN$ and $\gMetric$ 
	are related by
	\begin{align*}
		\gMN &= \pi_{M}^{*} \gM + \pi_{N}^{*}\gN \,, \\
		\gMetric &= F^{*}\gMN = \gM + f^{*}\gN \,.
	\end{align*}
	Now let us define the symmetric $2$-tensors
	\begin{align*}
		\sTensor_{M\times N} &\coloneqq \pi_{M}^{*}\gM - \pi_{N}^{*}\gN\,, \\
		\sTensor &\coloneqq F^{*}\sTensor_{M\times N} = \gM - f^{*}\gN \,.
	\end{align*}
	Note that $\sTensor_{M\times N}$ is a semi-Riemannian metric of signature $(m,n)$ on the
	manifold $M\times N$. \\
	
	The Levi-Civita connection $\nabla^{\gMN}$ associated to the
	Riemannian metric $\gMN$ on $M\times N$ is related to the Levi-Civita
	connections $\nabla^{\gM}$ on $(M,\gM)$ and $\nabla^{\gN}$
	on $(N,\gN)$ by
	\[
		\nabla^{\gMN} = \pi_{M}^{*}\nabla^{\gM} \oplus \pi_{N}^{*} \nabla^{\gN}\,.
	\]
	The corresponding curvature operator $\RMN$ on $M\times N$ with respect
	to the metric $\gMN$ is related to the curvature operators $\RM$ on
	$(M,\gM)$ and $\RN$ on $(N,\gN)$ by
	\[
		\RMN = \pi_{M}^{*}\RM \oplus \pi_{N}^{*}\RN\,.
	\]
	We denote the Levi-Civita connection on $M$ with respect to the induced
	metric $\gMetric=F^{*}\gMN$ by $\nabla$ and the corresponding curvature
	tensor by $\Rind$. \\
	
	The second fundamental tensor $\A$ of the graph $\Gamma(f)$ is
	defined as
	\[
		\A(v,w) \coloneqq \big( \widetilde{\nabla}\dd F\big)(v,w) = \nabla^{\gMN}_{\dd F(v)} \dd F(w) - \dd F(\nabla_{v}w) \,,
	\]
	where $v,w\in \Gamma(TM)$ and $\widetilde{\nabla}$ is the connection on $F^{*}T(M\times N)\otimes T^{*}M$ induced by the Levi-Civita connection.
	The trace of $\A$ with respect to the metric $\gMetric$ is called the
	\emph{mean curvature vector field} of $\Gamma(f)$ and it will be denoted by
	\[
		\Hv \coloneqq \tr \A \,.
	\]
	Note that $\Hv$ is a section in the normal bundle of the graph.
	If $\Hv$ vanishes identically, the graph is said to be minimal.
	A map $f:M\to N$ between Riemannian manifolds is called \emph{minimal},
	if its graph $\Gamma(f)$ is a minimal submanifold of the product
	space $(M\times N,\gMN)$ \cite{Sch93}.

	\subsection{Singular Value Decomposition}
	\label{sec:SVD}
	We recall the singular value decomposition theorem (see \cite[Section 3.2]{SHS13}
	and the references cited there). \\
	
	Fix a point $x\in M$, and let
	\[
		\svdf_{1}^{2}(x) \le \svdf_{2}^{2} \le \hdots \le \svdf_{m}^{2}(x)
	\]
	be the eigenvalues of $f^{*}\gN$ with respect to $\gM$. The
	corresponding values $\svdf_{i}\ge 0$, $i\in\{1,\hdots,m\}$, are called the
	\emph{singular values} of the differential $\dd f$ of $f$ and give rise
	to continuous functions on $M$. Let
	\[
		r \coloneqq r(x) \coloneqq \rank \dd f(x) \,.
	\]
	Obviously, $r\le \min\{m,n\}$ and $\svdf_{1}(x)=\cdots=\svdf_{m-r}(x)=0$. At the
	point $x$ consider an orthonormal basis $\{\alpha_{1},\hdots,\alpha_{m-r};\alpha_{m-r+1},\hdots,\alpha_{m}\}$
	with respect to $\gM$ which diagonalizes $f^{*}\gN$. Moreover, at $f(x)$ 
	consider a basis $\{\beta_{1},\hdots,\beta_{n-r};\beta_{n-r+1},\hdots,\beta_{n}\}$ that is
	orthonormal
	with respect to $\gN$, and such that
	\[
		\dd f(\alpha_{i}) = \svdf_{i}(x) \beta_{n-m+i}\,,
	\]
	for any $i\in\{m-r+1,\hdots,m\}$. The above procedure is called the
	\emph{singular value decomposition} of the differential $\dd f$. \\
	
	We construct a special basis for the tangent and the normal
	space of the graph in terms of the singular values. The vectors
	\[
		\widetilde{e}_i \coloneqq	\begin{cases} 
						\alpha_{i}\,, & 1\le i\le m-r\,, \\ 
						\frac{1}{\sqrt{1+\svdf_{i}^{2}(x)}}\big( \alpha_{i} \oplus \svdf_{i}(x)\beta_{n-m+i} \big)\,, &m-r+1 \le i \le m\,, 
					\end{cases}
	\]
	form an orthonormal basis with respect to the metric $\gMN$ of 
	the tangent space $\dd F(T_{x}M)$ of the graph $\Gamma(f)$ at $x$.
	It follows that with respect to the induced metric $\gind$, the vectors
	\[
		e_i \coloneqq \frac{1}{\sqrt{1+\svdf_i^2}} \alpha_i
	\]
	form an orthonormal basis of $T_xM$.
	Moreover,
	the vectors
	\[
		\xi_{i} \coloneqq	\begin{cases}
						\beta_{i}\,, & 1 \le i \le n-r\,, \\
						\frac{1}{\sqrt{1+\svdf_{i+m-n}^{2}(x)}}\bigl(-\svdf_{i+m-n}(x)\alpha_{i+m-n} \oplus \beta_{i}\bigr)\,, & n-r+1\le i\le n\,,
		          	\end{cases}
	\]
	form an orthonormal basis with respect to $\gMN$ of the
	normal space $\mathcal{N}_{x}M$ of the graph $\Gamma(f)$ at the point
	$x$. From the formulae above, we deduce that
	\[
		\sTensor_{M\times N}(\widetilde{e}_i,\widetilde{e}_j) = \sind(e_i,e_j) = \frac{1-\svdf_{i}^{2}(x)}{1+\svdf_{i}^{2}(x)}\delta_{ij} \,, \qquad 1\le i,j\le m \,.
	\]
	Therefore, the eigenvalues of the $2$-tensor $\sTensor$ with respect to $\gMetric$
	are given by
	\[
		\frac{1-\svdf_{1}^{2}(x)}{1+\svdf_{1}^{2}(x)} \ge \cdots \ge \frac{1-\svdf_{m-1}^{2}(x)}{1+\svdf_{m-1}^{2}(x)} \ge \frac{1-\svdf_{m}^{2}(x)}{1+\svdf_{m}^{2}(x)} \,.
	\]
	Moreover,
	\[
		\sTensor_{M\times N}(\xi_{i},\xi_{j}) =
			\begin{cases}
				- \delta_{ij}\,, & 1 \le i \le n-r \,, \\
				- \frac{1-\svdf_{i+m-n}^{2}(x)}{1+\svdf_{i+m-n}^{2}(x)}\delta_{ij} \,, & n-r+1\le i \le n\,,
			\end{cases}
	\]
	and
	\[
		\sTensor_{M\times N}(\widetilde{e}_{m-r+i},\xi_{n-r+j}) = - \frac{2\svdf_{m-r+i}(x)}{1+\svdf_{m-r+i}^{2}(x)} \delta_{ij} \,,\qquad 1\le i,j\le r \,.
	\]

	\section{Bernstein-type Theorems}
	\label{sec:Proofs}

	In order to show theorem \ref{thm:ThmA}, we need to control the eigenvalues of the tensor $\sTensor$.
	For this, following the ideas of \cite{SHS13}, let us define the tensor
	\[
		\Phi_{c} \coloneqq \sTensor - \frac{1-c}{1+c} \gMetric \,.
	\]
	The differential equation satisfied by $\Phi_c$
	was derived in \cite[Lemma 3.2]{SHS13} and is given by
	\begin{align*}
		\big( \Delta \Phi_{c} \big) (v,w) &= \sTensor_{M\times N}\big( \nabla_{v} \Hv, \dd F(w) \big) + \sTensor_{M\times N}\big( \nabla_{w}\Hv, \dd F(v) \big) \\
			& \qquad + 2 \frac{1-c}{1+c} \gMN\big( \Hv, \A(v,w) \big) \\
			& \qquad + \Phi_{c}\big( \Ric v, w \big) + \Phi_{c} \big( \Ric w, v \big) \\
			& \qquad + 2 \sum_{k=1}^{m} \left( \sTensor_{M\times N} - \frac{1-c}{1+c} \gMN \right) \big( \A(e_{k},v), \A(e_{k},w) \big) \\
			& \qquad + \frac{4}{1+c} \sum_{k=1}^{m} \big( f^{*}\RN(e_{k},v,e_{k},w) - c \RM(e_{k},v,e_{k},w) \big) \,,
	\end{align*}
	where
	\[
		\Ric v \coloneqq - \sum_{k=1}^{m} \Rind(e_{k},v) e_{k}
	\]
	is the Ricci operator on $(M,\gMetric)$ and $\{e_{1},\hdots,e_{m}\}$ is any orthonormal
	frame with respect to the induced metric $\gMetric$. \\
	
	Now let $f:M\to N$ be minimal, i.\,e.\ $\Hv=0$. 
	Setting 
	\begin{align*}
		\Psi_c(\vartheta)(v,w) &\coloneqq - \vartheta(\Ric v,w) - \vartheta(\Ric w,v) \\
			& \qquad - 2 \sum_{k=1}^{m} \left( \sTensor_{M\times N} - \frac{1-c}{1+c} \gMN \right) \big( \A(e_{k},v), \A(e_{k},w) \big) \\
			& \qquad - \frac{4}{1+c} \sum_{k=1}^{m} \big( f^{*}\RN(e_{k},v,e_{k},w) - c \RM(e_{k},v,e_{k},w) \big) \,,
	\end{align*}
	we can write
	\[
		\Delta \Phi_c + \Psi_c(\Phi_c) = 0\,.
	\]

	To estimate the terms in $\Psi_c$ which contain the second 
	fundamental tensor, we have the following statement.

	\begin{lemma}
		\label{lem:AEst}
		Let $\eta\in T_{p}^{\perp}M$ be a normal vector on the graph.
		Then for any $c\ge\svdf_{m}^{2}$ the estimate
		\[
			\sMN(\eta,\eta) \le \frac{c-1}{1+c} \gMN(\eta,\eta)
		\]
		holds. In particular,
		\[
			\sMN\big( \A(v,w), \A(v,w) \bigr) \le \frac{c-1}{1+c} \|\A(v,w)\|^2 \,.
		\]
	\end{lemma}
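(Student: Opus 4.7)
The plan is to diagonalize $\sMN$ on the normal space $\mathcal{N}_{x}M$ in the SVD-adapted orthonormal basis $\{\xi_{1},\ldots,\xi_{n}\}$ recalled in section \ref{sec:SVD}, and then reduce the required inequality to a monotonicity statement about the scalar function $\varphi(t)=\frac{t-1}{1+t}$ on $[0,\infty)$.

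The formulas collected in section \ref{sec:SVD} already give $\sMN(\xi_{i},\xi_{j})=-\delta_{ij}$ for $1\le i\le n-r$ and $\sMN(\xi_{i},\xi_{j})=\frac{\svdf_{i+m-n}^{2}-1}{1+\svdf_{i+m-n}^{2}}\delta_{ij}$ for $n-r+1\le i\le n$, so $\sMN$ is diagonal in $\{\xi_{i}\}$ with every diagonal entry of the form $\varphi(t)$ for some $t\in\{0,\svdf_{m-r+1}^{2},\ldots,\svdf_{m}^{2}\}$. Since $\varphi'(t)=2/(1+t)^{2}>0$, the function $\varphi$ is strictly increasing on $[0,\infty)$, and the ordering $0=\svdf_{1}^{2}\le\cdots\le\svdf_{m}^{2}\le c$ yields
\[
\sMN(\xi_{i},\xi_{i}) \;=\; \varphi(t) \;\le\; \varphi(c) \;=\; \frac{c-1}{1+c}
\]
for every $i$.

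Expanding an arbitrary normal vector as $\eta=\sum_{i}\eta^{i}\xi_{i}$, we have $\gMN(\eta,\eta)=\sum_{i}(\eta^{i})^{2}$, and by diagonality
\[
\sMN(\eta,\eta) \;=\; \sum_{i}(\eta^{i})^{2}\,\sMN(\xi_{i},\xi_{i}) \;\le\; \frac{c-1}{1+c}\sum_{i}(\eta^{i})^{2} \;=\; \frac{c-1}{1+c}\,\gMN(\eta,\eta),
\]
which is the first claim. The \emph{in particular} assertion then follows immediately by taking $\eta=\A(v,w)$, which lies in the normal bundle of the graph by the Gauss decomposition.

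There is essentially no obstacle here: the lemma is a pointwise linear-algebra fact that, given the already-computed SVD-diagonalization of $\sMN|_{\mathcal{N}_{x}M}$, reduces to the monotonicity of $t\mapsto(t-1)/(1+t)$ together with the bound $\svdf_{m}^{2}\le c$. The only mild point of care is the index shift $i\mapsto i+m-n$ in the second block of normal vectors, which must be tracked so that the singular values entering $\varphi$ are genuinely a subset of $\{\svdf_{1}^{2},\ldots,\svdf_{m}^{2}\}$ and hence all bounded by $\svdf_{m}^{2}$.
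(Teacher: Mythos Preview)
Your proof is correct and follows essentially the same approach as the paper: both diagonalize $\sMN$ on the normal space in the SVD-adapted basis $\{\xi_i\}$ and bound each diagonal entry by $\frac{c-1}{1+c}$ using $\svdf_{i+m-n}^{2}\le\svdf_m^2\le c$. You are simply more explicit about the monotonicity of $\varphi(t)=\frac{t-1}{1+t}$ and about passing from the diagonal bound to a general $\eta$, whereas the paper leaves these routine steps implicit.
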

	
	\begin{proof}
		Using the formulae for the singular values above, we calculate
		\[
			\sTensor_{M\times N}(\xi_{i},\xi_{j}) = - \delta_{ij} \le - \frac{1-c}{1+c} \delta_{ij} \,, \qquad 1\le i,j\le n-r
		\]
		and
		\begin{align*}
			\sTensor_{M\times N}(\xi_{i},\xi_{j}) &= - \frac{1-\svdf_{i+m-n}^{2}}{1+\svdf_{i+m-n}^{2}} \delta_{ij} \\
				&\le - \frac{1-c}{1+c} \delta_{ij} \,, \qquad \qquad n-r+1 \le i,j\le n \,. \qedhere
		\end{align*}
	\end{proof}

	The terms involving the curvatures can be decomposed in the following way.	
		
	\begin{lemma}
		\label{lem:PhiNullEV}
		Let $\{e_1,\dotsc,e_m\}$ be a local $\gind$-orthonormal frame. Then for any $e_l$ we have
		\begin{align*}
			& 2 \sum_{k=1}^{m}\big( f^{*}\RN(e_{k},e_{l},e_{k},e_{l}) - c \RM(e_{k},e_{l},e_{k},e_{l}) \big) \\
			& \quad = -2 \sum_{k\ne l} f^{*}\gN(e_{k},e_{k}) \biggl\{ \big( \sigma - \sec_{N}(\dd f(e_{k})\wedge\dd f(e_{l})) \big) f^*\gN(e_l,e_l) \\
					& \qquad \qquad \qquad \qquad \qquad \qquad + \sigma \Phi_c(e_l,e_l) \biggr\} \\
				& \qquad - c \gM(e_l,e_l) \sum_{k\ne l} \Phi_c(e_k,e_k) \bigl( \sec_M(e_k\wedge e_l) - \sigma \bigr) \\
				& \qquad - \frac{2c}{1+c} \bigl( \Ric_M(e_l,e_l) - (m-1) \sigma \gM(e_l,e_l) \bigr)\\
				& \qquad - \frac{2\sigma c}{1+c} \bigl( \tr(\sind) - \sind(e_l,e_l) \bigr) \\
				& \qquad - \frac{\sigma(1+c)}{2} \Phi_c(e_l,e_l) \left( \tr\bigl(\Phi_c\bigr) - \Phi_c(e_l,e_l) \right) \,.
		\end{align*}
	\end{lemma}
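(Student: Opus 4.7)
The plan is to reduce everything to a pointwise computation in the $\gind$-orthonormal frame $\{e_1,\dotsc,e_m\}$ arising from the singular value decomposition of $\dd f$ (section \ref{sec:SVD}): at the given point this frame simultaneously diagonalises $\gM$, $f^{*}\gN$, $\sind$ and hence $\Phi_c$. Writing $\phi_k\coloneqq\Phi_c(e_k,e_k)$, the identities $\gind=\gM+f^{*}\gN$ and $\sind=\Phi_c+\tfrac{1-c}{1+c}\gind$ give
\[
    \gM(e_k,e_k)=\tfrac{1}{1+c}+\tfrac{1}{2}\phi_k,\qquad f^{*}\gN(e_k,e_k)=\tfrac{c}{1+c}-\tfrac{1}{2}\phi_k,
\]
and all off-diagonal entries vanish. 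Because $\dd f(e_i)$ is a multiple of $\beta_{n-m+i}$ (and is zero for $i\le m-r$), the vectors $\dd f(e_k),\dd f(e_l)$ are $\gN$-orthogonal for $k\neq l$, and therefore
\[
    f^{*}\RN(e_k,e_l,e_k,e_l)=\sec_N\!\bigl(\dd f(e_k)\wedge\dd f(e_l)\bigr)\,f^{*}\gN(e_k,e_k)\,f^{*}\gN(e_l,e_l),
\]
with the analogous formula for $\RM$; the $k=l$ contributions drop out by antisymmetry of the curvature tensor.

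Next I would split the sectional curvatures around $\sigma$ by $\sec_N=\sigma-(\sigma-\sec_N)$ and $\sec_M=\sigma+(\sec_M-\sigma)$. The $(\sigma-\sec_N)$-piece immediately becomes the first half of the first bracket on the right-hand side. In the $(\sec_M-\sigma)$-piece I would expand only one factor $\gM(e_k,e_k)=\tfrac{1}{1+c}+\tfrac{1}{2}\phi_k$ and treat the two resulting contributions separately. Since $\alpha_l=\sqrt{1+\svdf_l^{2}}\,e_l$ is $\gM$-orthonormal, the sectional curvatures satisfy $\gM(e_l,e_l)\sum_{k\neq l}\sec_M(e_k\wedge e_l)=\Ric_M(e_l,e_l)$, so the $\tfrac{1}{1+c}$-portion collapses to $-\tfrac{2c}{1+c}\bigl(\Ric_M(e_l,e_l)-(m-1)\sigma\gM(e_l,e_l)\bigr)$, while the $\tfrac{1}{2}\phi_k$-portion reproduces the $\Phi_c(e_k,e_k)$-weighted second line of the right-hand side.

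What then remains is the pure-$\sigma$ contribution
\[
    2\sigma\sum_{k\neq l}\bigl(f^{*}\gN(e_k,e_k)\,f^{*}\gN(e_l,e_l)-c\,\gM(e_k,e_k)\,\gM(e_l,e_l)\bigr),
\]
which has to reproduce the $\sigma\Phi_c(e_l,e_l)$-summand of the first bracket together with the final two lines of the right-hand side. Substituting the formulas above for $\gM(e_k,e_k)$ and $f^{*}\gN(e_k,e_k)$ in terms of $\phi_k$, and using $\sind(e_k,e_k)=\phi_k+\tfrac{1-c}{1+c}$ together with $\tr(\sind)=\tr(\Phi_c)+\tfrac{m(1-c)}{1+c}$, both sides become polynomials in $\phi_l$ and $\tr(\Phi_c)$. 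Comparing monomial by monomial — constant, $\phi_l$, $\tr(\Phi_c)$, $\phi_l^{2}$, and $\phi_l\,\tr(\Phi_c)$ — all coefficients coincide; in particular the constant terms agree as $-\tfrac{2c(m-1)(1-c)\sigma}{(1+c)^{2}}$ and the quadratic pieces each yield $\pm\tfrac{(1-c)\sigma}{2}$.

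The main obstacle is organisational rather than conceptual: keeping track of the factors $\tfrac{1}{1+c}$ versus $\tfrac{c}{1+c}$ across the several rewrites and performing the polynomial comparison in the last step without sign errors. The value of the lemma lies in how the right-hand side is structured: the first two lines have a definite sign under the pinching $\sec_N\le\sigma\le\sec_M$, the Ricci-difference and the $\tr(\sind)$-summand can be controlled by the hypotheses of theorem \ref{thm:ThmA}, and the quadratic $\Phi_c$-correction is the term that will ultimately drive the null-eigenvector analysis of $\Psi_c$.
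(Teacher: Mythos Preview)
Your proposal is correct and follows essentially the same route as the paper: both work in the singular-value frame, write the curvature terms via sectional curvatures, split $\sec_N$ and $\sec_M$ around $\sigma$, extract the Ricci line by expanding $2\gM(e_k,e_k)=\Phi_c(e_k,e_k)+\tfrac{2}{1+c}$, and then reduce the residual pure-$\sigma$ contribution by algebra. The only cosmetic difference is in that last step---the paper carries out successive substitutions using $\sind=\Phi_c+\tfrac{1-c}{1+c}\gind$, whereas you propose a monomial-by-monomial comparison in the $\phi_k$; both are equivalent bookkeeping.
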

	
	\begin{proof}
		We calculate
		\begin{align*}
			& 2 \sum_{k=1}^{m}\big( f^{*}\RN(e_{k},e_{l},e_{k},e_{l}) - c \RM(e_{k},e_{l},e_{k},e_{l}) \big) \\
			& \quad  = 2 \sum_{k\ne l} \sec_{N}\big( \dd f(e_{k})\wedge\dd f(e_{l}) \big) f^{*}\gN(e_{k},e_{k}) f^{*}\gN(e_{l},e_{l}) \\
				& \qquad - 2c \sum_{k\ne l} \sec_{M}(e_{k}\wedge e_{l}) \gM(e_{k},e_{k}) \gM(e_{l},e_{l}) \\
			& \quad = -2 \sum_{k\ne l} f^{*}\gN(e_{k},e_{k})\biggl\{ \bigl( \sigma - \sec_{N}(\dd f(e_{k})\wedge\dd f(e_{l})) \bigr) f^{*}\gN(e_{l},e_{l}) \\
				& \qquad \qquad \qquad \qquad \qquad \qquad + \sigma\biggl( \gM(e_{l},e_{l}) - f^{*}\gN(e_{l},e_{l}) - \frac{1-c}{1+c} \biggr) \biggr\} \\
				& \qquad + 2\sigma \left( \gM(e_{l},e_{l}) - \frac{1-c}{1+c} \right) \sum_{k\ne l} f^{*}\gN(e_{k},e_{k}) \\
				& \qquad - 2c\sum_{k\ne l} \bigl( \sec_M(e_k\wedge e_l) - \sigma \bigr) \gM(e_k,e_k) \gM(e_l,e_l) \\
				& \qquad - 2c\sigma \sum_{k\ne l} \gM(e_k,e_k) \gM(e_l,e_l) \,.
		\end{align*}
		Since
		\[
			2 \gM(e_k,e_k) = \sind(e_k,e_k) + 1 = \left( \sind(e_k,e_k) - \frac{1-c}{1+c} \right) + \frac{2}{1+c} \,,
		\]
		the term involving the sectional curvatures of $M$ evaluates to
		\begin{align*}
				& - 2c\sum_{k\ne l} \bigl( \sec_M(e_k\wedge e_l) - \sigma \bigr) \gM(e_k,e_k) \gM(e_l,e_l) \\
				& \quad = - c \gM(e_l,e_l) \sum_{k\ne l} \left( \sind(e_k,e_k) - \frac{1-c}{1+c} \right) \bigl( \sec_M(e_k\wedge e_l) - \sigma \bigr) \\
					& \qquad - \frac{2c}{1+c} \bigl( \Ric_M(e_l,e_l) - (m-1) \sigma \gM(e_l,e_l) \bigr) \,.
		\end{align*}
		Further, using
		\[
			2 \gM(e_k,e_k) = 1 + \sind(e_k,e_k) \,, \qquad 2 f^*\gN(e_k,e_k) = 1 - \sind(e_k,e_k) \,,
		\]
		for the remaining terms we get
		\begin{align*}
				& 2\sigma \left( \gM(e_{l},e_{l}) - \frac{1-c}{1+c} \right) \sum_{k\ne l} f^{*}\gN(e_{k},e_{k}) - 2c\sigma \sum_{k\ne l} \gM(e_k,e_k) \gM(e_l,e_l) \\
					& \quad = \frac{2\sigma c}{1+c} \sum_{k\ne l} \bigl( f^*\gN(e_k,e_k) - \gM(e_k,e_k) \bigr) \\
						& \qquad \quad + 2\sigma \left( \gM(e_l,e_l) - \frac{1}{1+c} \right) \sum_{k\ne l} \bigl( f^*\gN(e_k,e_k) - c\gM(e_k,e_k) \bigr) \\
					& \quad = - \frac{2\sigma c}{1+c} \bigl( \tr(\sind) - \sind(e_l,e_l) \bigr) \\
						& \qquad \quad + \sigma \left( \sind(e_l,e_l) - \frac{1-c}{1+c} \right) \sum_{k\ne l}\left( \frac{1-c}{2} - \frac{1+c}{2} \sind(e_k,e_k) \right) \\
					& \quad = - \frac{2\sigma c}{1+c} \bigl( \tr(\sind) - \sind(e_l,e_l) \bigr) \\
						& \qquad \quad + \frac{1+c}{2} \sigma \Phi_c(e_l,e_l) \left( (m-1)\frac{1-c}{1+c} - \tr(\sind) + \sind(e_l,e_l) \right) \,.
		\end{align*}
		Collecting all terms and using $\sind=\Phi_c+\frac{1-c}{1+c}\gind$, the claim of the lemma follows.
	\end{proof}
	
	\begin{corollary}
		\label{cor:Psi}
		Let $\{e_1,\dotsc,e_m\}$ be a local $\gind$-orthonormal frame. Then for any $e_l$ we have
		\begin{align*}
			& 2 \sum_{k=1}^{m}\big( f^{*}\RN(e_{k},e_{l},e_{k},e_{l}) - c \RM(e_{k},e_{l},e_{k},e_{l}) \big) \\
			& \quad = -2 \sum_{k\ne l} f^{*}\gN(e_{k},e_{k}) \big( \sigma - \sec_{N}(\dd f(e_{k})\wedge\dd f(e_{l})) \big) f^*\gN(e_l,e_l) \\
				& \qquad - c \gM(e_l,e_l) \sum_{k\ne l} \Phi_c(e_k,e_k) \bigl( \sec_M(e_k\wedge e_l) - \sigma \bigr) \\
				& \qquad - \frac{2c}{1+c} \bigl( \Ric_M(e_l,e_l) - (m-1) \sigma \gM(e_l,e_l) \bigr)\\
				& \qquad - \frac{2c\sigma }{1+c} \tr(\sind) \\
				& \qquad + \frac{\sigma(1-c)}{2} \Phi_c(e_l,e_l) \left( \tr\bigl(\Phi_c\bigr) - \Phi_c(e_l,e_l) \right) \\
				& \qquad - \frac{2c\sigma}{1+c} \left( (m-2) \Phi_c(e_l,e_l) - \frac{1-c}{1+c}  \right)\,.
		\end{align*}
	\end{corollary}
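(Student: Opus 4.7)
The plan is to obtain the corollary by pure algebraic rearrangement of the identity furnished by Lemma \ref{lem:PhiNullEV}. Comparing the two statements, the first brace of the lemma must be split (the $\sigma\Phi_c(e_l,e_l)$ piece pulled out), the term involving $\tr(\sind)-\sind(e_l,e_l)$ must be separated into a pure $\tr(\sind)$ piece plus corrections, and the quadratic term in $\Phi_c(e_l,e_l)$ must have its coefficient changed from $-\frac{\sigma(1+c)}{2}$ to $+\frac{\sigma(1-c)}{2}$. All of these shifts should balance against each other, producing the extra $-\frac{2c\sigma}{1+c}\bigl((m-2)\Phi_c(e_l,e_l)-\frac{1-c}{1+c}\bigr)$ line.

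Concretely, I would use the two identities
\begin{align*}
\sind(e_k,e_k) &= \Phi_c(e_k,e_k) + \tfrac{1-c}{1+c}, \\
2 f^*\gN(e_k,e_k) &= 1 - \sind(e_k,e_k) = \tfrac{2c}{1+c} - \Phi_c(e_k,e_k),
\end{align*}
which follow from the definition $\Phi_c = \sind - \frac{1-c}{1+c}\gind$ and the $\gind$-orthonormality of the frame. Summing the second identity over $k\ne l$ yields $\sum_{k\ne l} 2 f^*\gN(e_k,e_k) = (m-1)\tfrac{2c}{1+c} - \bigl(\tr(\Phi_c)-\Phi_c(e_l,e_l)\bigr)$, so extracting the $\sigma\Phi_c(e_l,e_l)$ contribution from the first brace gives
\[
-2\sigma\Phi_c(e_l,e_l)\sum_{k\ne l}f^*\gN(e_k,e_k) = -\tfrac{2c\sigma(m-1)}{1+c}\Phi_c(e_l,e_l) + \sigma\Phi_c(e_l,e_l)\bigl(\tr(\Phi_c)-\Phi_c(e_l,e_l)\bigr).
\]
Combining the last summand with the pre-existing $-\frac{\sigma(1+c)}{2}\Phi_c(e_l,e_l)\bigl(\tr(\Phi_c)-\Phi_c(e_l,e_l)\bigr)$ and using $1-\tfrac{1+c}{2}=\tfrac{1-c}{2}$ immediately produces the fifth line of the corollary.

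To finish, I would use the first identity on the $\sind(e_l,e_l)$ piece of the lemma to obtain
\[
-\tfrac{2\sigma c}{1+c}\bigl(\tr(\sind)-\sind(e_l,e_l)\bigr) = -\tfrac{2c\sigma}{1+c}\tr(\sind) + \tfrac{2c\sigma}{1+c}\Phi_c(e_l,e_l) + \tfrac{2c\sigma(1-c)}{(1+c)^2}.
\]
Adding the residual $-\tfrac{2c\sigma(m-1)}{1+c}\Phi_c(e_l,e_l)$ from the previous step collapses the $\Phi_c(e_l,e_l)$ coefficients into $-\tfrac{2c\sigma(m-2)}{1+c}\Phi_c(e_l,e_l)$, and repackaging with the constant $\tfrac{2c\sigma(1-c)}{(1+c)^2}$ gives precisely $-\tfrac{2c\sigma}{1+c}\bigl((m-2)\Phi_c(e_l,e_l)-\tfrac{1-c}{1+c}\bigr)$. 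The $\Ric_M$ and $\sec_M$ terms of the lemma remain untouched, so collecting everything produces the stated identity. Since the derivation is pure bookkeeping, the only real risk is a sign slip in distributing $\tfrac{1-c}{1+c}$; there is no genuine conceptual obstacle.
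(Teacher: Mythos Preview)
Your proposal is correct and follows essentially the same route as the paper's own proof: both start from Lemma~\ref{lem:PhiNullEV}, pull the $\sigma\Phi_c(e_l,e_l)$ term out of the brace, evaluate $-2\sigma\Phi_c(e_l,e_l)\sum_{k\ne l}f^*\gN(e_k,e_k)$ via $2f^*\gN(e_k,e_k)=1-\sind(e_k,e_k)=\tfrac{2c}{1+c}-\Phi_c(e_k,e_k)$, merge the resulting $\sigma\Phi_c(e_l,e_l)\bigl(\tr(\Phi_c)-\Phi_c(e_l,e_l)\bigr)$ with the existing quadratic term, and then expand $\sind(e_l,e_l)=\Phi_c(e_l,e_l)+\tfrac{1-c}{1+c}$ to collect the remaining pieces into the final line.
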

	
	\begin{proof}
		We note that
		\begin{align*}
			& -2 \sigma \Phi_c(e_l,e_l) \sum_{k\ne l} f^*\gN(e_k,e_k) \\
			& \quad = - \sigma \Phi_c(e_l,e_l) \sum_{k\ne l} \bigl( 1 - \sind(e_k,e_k) \bigr) \\
				&\quad = - \sigma \Phi_c(e_l,e_l) \bigl( (m-1) - \tr(\sind) + \sind(e_l,e_l) \bigr) \\
				&\quad = - \sigma \Phi_c(e_l,e_l) \biggl( (m-1) - \tr(\Phi_c) - m\frac{1-c}{1+c} + \Phi_c(e_l,e_l) + \frac{1-c}{1+c} \biggr) \\
				&\quad = \sigma \Phi_c(e_l,e_l) \Bigl( \tr(\Phi_c) - \Phi_c(e_l,e_l) \Bigr) - (m-1) \frac{2c\sigma}{1+c} \Phi_c(e_l,e_l) \,.
		\end{align*}
		Then from lemma \ref{lem:PhiNullEV} and using $\sind=\Phi_c+\frac{1-c}{1+c}\gind$ we get
		\begin{align*}
			& 2 \sum_{k=1}^{m}\big( f^{*}\RN(e_{k},e_{l},e_{k},e_{l}) - c \RM(e_{k},e_{l},e_{k},e_{l}) \big) \\
				& \quad = -2 \sum_{k\ne l} f^*\gN(e_k,e_k) \bigl( \sigma - \sec_N(\dd f(e_k) \wedge \dd f(e_l) \bigr) f^*\gN(e_l,e_l) \\
				& \qquad + \sigma \Phi_c(e_l,e_l) \Bigl( \tr(\Phi_c) - \Phi_c(e_l,e_l) \Bigr) - (m-1) \frac{2c \sigma}{1+c} \Phi_c(e_l,e_l) \\
				& \qquad - c\gM(e_l,e_l) \sum_{k\ne l} \Phi_c(e_k,e_k) \bigl( \sec_M(e_k\wedge e_l) - \sigma \bigr) \\
				& \qquad - \frac{2c}{1+c} \bigl( \Ric_M(e_l,e_l) - (m-1) \sigma \gM(e_l,e_l) \bigr) \\
				& \qquad - \frac{2\sigma c}{1+c} \tr(\sind) + \frac{2\sigma c}{1+c} \left( \Phi_c(e_l,e_l) + \frac{1-c}{1+c} \right) \\
				& \qquad - \frac{\sigma(1+c)}{2} \Phi_c(e_l,e_l) \bigl( \tr(\Phi_c) - \Phi_c(e_l,e_l) \bigr) \,.
		\end{align*}
		Rearranging the terms, the claim follows.
	\end{proof}

	\subsection{Proof of Theorem \ref{thm:ThmA}}
	From the compactness of $M$ we get that the largest singular 
	value $\lambda_m$ of $\dd f$ attains
	its maximum at some point on $M$. 
	Let $\svdf_{0}$ denote this maximum,
	\[
		\svdf_{0}^{2} \coloneqq \max_{x\in M} \svdf_{m}^{2}(x) \,.
	\]
	Then $\Phi_{\svdf_{0}^{2}}$ is non-negative definite, and at the point where $\svdf_{m}^{2}$
	attains its maximum, we know that $e_{m}$ is a null-eigenvector of $\Phi_{\svdf_{0}^{2}}$.
		
	\begin{lemma}
		\label{lem:StrictLengthDecr}
		Assume $\sec_N\le\sigma\le\sec_M$ for some $\sigma>0$ and assume
		$\svdf_{0}^2 < 1$. Then the map $f:M\to N$ is constant.
	\end{lemma}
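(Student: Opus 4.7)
The plan is to take $c \coloneqq \svdf_{0}^{2}$ and apply the second-derivative criterion (Theorem \ref{thm:SDC}) at a point where $\svdf_{m}^{2}$ attains its maximum. Since $\svdf_{0}^{2} < 1$, we have $0 \le c < 1$ and $c \ge \svdf_{m}^{2}(x)$ pointwise, so by the formulae in Section \ref{sec:SVD} the eigenvalues of $\sind$ with respect to $\gind$ are $\frac{1-\svdf_{i}^{2}}{1+\svdf_{i}^{2}}$ and $\Phi_{c} = \sind - \frac{1-c}{1+c}\gind$ is non-negative definite on all of $M$. At any maximum point of $\svdf_{m}^{2}$, the vector $e_{m}$ is a null eigenvector of $\Phi_{c}$.

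Assume for contradiction that $f$ is non-constant, so $c > 0$. The largest eigenvalue of $-\Phi_{c}$ is $\le 0$ everywhere and equals $0$ at the chosen point, so Theorem \ref{thm:SDC} with $\Lscr = \Delta$ yields $\Delta \Phi_{c}(e_{m}, e_{m}) \ge 0$. Combined with the identity $\Delta \Phi_{c} + \Psi_{c}(\Phi_{c}) = 0$ derived in Section \ref{sec:Proofs}, this forces $\Psi_{c}(\Phi_{c})(e_{m}, e_{m}) \le 0$. The strategy is then to estimate $\Psi_{c}(\Phi_{c})(e_{m}, e_{m})$ from below and show that it is in fact strictly positive.

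Since $\Phi_{c} \ge 0$ and $\Phi_{c}(e_{m}, e_{m}) = 0$ imply $\Phi_{c}(e_{m}, \cdot) = 0$, the Ricci contribution to $\Psi_{c}(\Phi_{c})(e_{m}, e_{m})$ vanishes. The term involving $\A$ is non-negative by Lemma \ref{lem:AEst}, since $c \ge \svdf_{m}^{2}$ makes $\sMN - \frac{1-c}{1+c}\gMN$ non-positive on the normal bundle. For the curvature contribution, apply Corollary \ref{cor:Psi} with $l = m$ and $\Phi_{c}(e_{m}, e_{m}) = 0$: the three lines containing $\sigma - \sec_{N}$, $\sec_{M} - \sigma$, and $\Ric_{M} - (m-1)\sigma \gM$ are each non-positive under $\sec_{N} \le \sigma \le \sec_{M}$, while the remaining trace and constant terms combine, using $\tr(\sind) = \tr(\Phi_{c}) + m\frac{1-c}{1+c}$, as
\[
	-\frac{2c\sigma}{1+c}\tr(\sind) + \frac{2c\sigma(1-c)}{(1+c)^{2}} = -\frac{2c\sigma}{1+c}\Bigl[\tr(\Phi_{c}) + (m-1)\frac{1-c}{1+c}\Bigr].
\]
For $m \ge 2$, $0 < c < 1$ and $\tr(\Phi_{c}) \ge 0$, this is strictly negative, and multiplying by $-\frac{2}{1+c}$ as prescribed by the definition of $\Psi_{c}$ shows that the curvature contribution is strictly positive. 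Hence $\Psi_{c}(\Phi_{c})(e_{m}, e_{m}) > 0$, contradicting the inequality from the second-derivative criterion. Consequently $c = 0$, i.e.\ $\svdf_{m} \equiv 0$, and $f$ is constant.

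The main obstacle is the sign bookkeeping in the curvature term: a single positive constant $\frac{2c\sigma(1-c)}{(1+c)^{2}}$ appears in the expansion from Corollary \ref{cor:Psi}, and one must check that it is strictly dominated by $\frac{2c\sigma}{1+c}\tr(\sind)$ after the algebraic rearrangement above. The strict length-decreasing hypothesis $\svdf_{0}^{2} < 1$ enters precisely here, as it makes both $c < 1$ and $\tr(\sind) > 0$ nontrivial and yields the surviving positive constant $(m-1)\frac{1-c}{1+c}$ that guarantees strict negativity.
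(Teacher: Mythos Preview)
Your proof is correct. It differs from the paper's argument in a meaningful way and is in fact more economical for this particular lemma.

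The paper proceeds in two stages: it first verifies that $\Psi_{\svdf_0^2}$ satisfies the null-eigenvector condition and invokes the strong maximum principle (Theorem~\ref{thm:SMP}) to propagate the null-eigenvector of $\Phi_{\svdf_0^2}$ from the single maximum point to every point of $M$; only then does it apply the second-derivative criterion (Theorem~\ref{thm:SDC}) at an arbitrary point, using Lemma~\ref{lem:PhiNullEV} to force $\svdf_0^2=0$. You bypass the strong maximum principle entirely: you apply the second-derivative criterion just once, at the maximum point of $\svdf_m^2$, and derive a strict sign contradiction from Corollary~\ref{cor:Psi}. Your algebraic reorganisation of the residual curvature terms into $-\tfrac{2c\sigma}{1+c}\bigl[\tr(\Phi_c)+(m-1)\tfrac{1-c}{1+c}\bigr]$ is clean and makes the strict positivity (after the $-\tfrac{2}{1+c}$ factor) transparent for $0<c<1$ and $m\ge 2$.

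What the paper's longer route buys is uniformity: by first spreading the null-eigenvector to every point, the subsequent analysis in Section~\ref{sec:Proofs} (for the case $\svdf_0^2\ge 1$) can be carried out at an arbitrary point, which is needed there to extract the pointwise conclusions $\sec_M=\sigma$, $\sec_N|_{\dd f(TM)}=\sigma$, etc. For the present lemma, however, where the only goal is $\svdf_0^2=0$, your single-point contradiction argument suffices and avoids checking the null-eigenvector condition for general $\vartheta$.
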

	
	\begin{proof}
		We first show that 
		$\Psi_{\svdf_0^2}$ satisfies the null-eigenvector condition. Let
		$v$ be a null-eigenvector of the positive semi-definite tensor $\vartheta$
		and extend it to a local $\gind$-orthonormal frame $\{e_1,\dotsc,e_l=v,\dotsc,e_m\}$.
		Since $\svdf_0^2<1$, the map $f$ is strictly length-decreasing and accordingly it is
		$\sind(w,w)>0$ for any vector $w\ne 0$. 
		Also noting that $\Phi_{\svdf_0^2}(w,w)\ge 0$ for any $w$
		by construction and
		\begin{align*}
			\tr(\sind) - \sind(v,v) &= \sum_{k\ne l} \underbrace{\sind(e_k,e_k)}_{>0} > 0\,, \\
			\tr(\Phi_{\svdf_0^2}) - \Phi_{\svdf_0^2}(v,v) &= \sum_{k\ne l} \underbrace{\Phi_{\svdf_0^2}(e_k,e_k)}_{\ge 0}  \ge 0 \,,
		\end{align*}
		we calculate
		\begin{align*}
			\Psi_{\svdf_0^2} (\vartheta)(v,v) &\stackrel{\text{Lem.\ \ref{lem:AEst}}}{\ge}  \underbrace{4 \frac{1-\svdf_0^2}{1+\svdf_0^2} \sum_{k=1}^m \|\A(e_k,v)\|^2}_{\ge 0} \\
					& \qquad \qquad - \frac{4}{1+\svdf_0^2} \sum_{k=1}^m \bigl( f^*\RN(e_k,v,e_k,v) - \svdf_0^2 \RM(e_k,v,e_k,v) \bigr) \\
				&\stackrel{\hphantom{\text{Lem.\ \ref{lem:AEst}}}}{\ge} \frac{4\sigma \svdf_0^2}{(1+\svdf_0^2)^2}\left( \tr(\sind) - \sind(v,v) \right) \\
					& \qquad \qquad + \sigma \Phi_{\svdf_0^2}(v,v) \bigl( \tr(\Phi_{\svdf_0^2}) - \Phi_{\svdf_0^2}(v,v) \bigr) \\
				&\stackrel{\hphantom{\text{Lem.\ \ref{lem:AEst}}}}{\ge} 0\,,
		\end{align*}
		where we have used lemma \ref{lem:PhiNullEV} and the curvature assumptions.
		Thus, the null-eigenvector condition is satisfied. \\
		
		By the definition
		of $\svdf_0^2$, at the point $x\in M$ where the largest singular value $\svdf_m$
		of $\dd f$ attains its maximal value, we have $\Phi_{\svdf_0^2}(e_m,e_m)=0$,
		so that $\Phi_{\svdf_0^2}$ has a null-eigenvector at that point.
		Consequently, by the strong
		maximum principle (see theorem \ref{thm:SMP}),
		$\Phi_{\svdf_0^2}$  
		has a null-eigenvector at any point of $M$. \\
		
		We will now show that if $\Phi_{\svdf_0^2}$ admits a null-eigenvector at some point $x$,
		then $\Phi_{\svdf_0^2}$ vanishes at $x$. Since by definition $\Phi_{\svdf_0^2}\ge 0$,
		we can apply the second derivative test criterion at an arbitrary point $x\in M$.
		Using lemmas \ref{lem:AEst} and \ref{lem:PhiNullEV}, at $x$ we calculate
		\begin{align*}
			0 &\stackrel{\substack{\hphantom{\text{Lem.\ \ref{lem:PhiNullEV}}}\\\text{min}}}{\le} (\Delta \Phi_{\svdf_0^2})(e_m,e_m) \\
				&\stackrel{\text{Lem.\ \ref{lem:AEst}}}{\le} \underbrace{ 4 \frac{\svdf_0^2-1}{1+\svdf_0^2} \sum_{k=1}^m \| \A(e_k,e_m) \|^2}_{\le 0} \\
					& \qquad \qquad + \frac{4}{1+\svdf_0^2} \sum_{k=1}^m \bigl( f^*\RN(e_k,e_m,e_k,e_m) - \svdf_0^2 \RM(e_k,e_m,e_k,e_m) \bigr) \\
				& \stackrel{\text{Lem.\ \ref{lem:PhiNullEV}}}{\le} \underbrace{- \frac{4\sigma\svdf_0^2}{(1+\svdf_0^2)^2}}_{\le 0} \sum_{k\ne m} \underbrace{\sind(e_k,e_k)}_{>0} = 0 \,.
		\end{align*}
		Consequently, since $\sigma>0$, 
		we must have $\svdf_0^2=0$. Since $\svdf_0^2$ is the maximum
		of the largest singular value, it is $0 \le \svdf_1^2(x) = \dotsb = \svdf_m^2(x) \le \svdf_0^2 = 0$.
	\end{proof}
	
	To prove the remaining case of theorem \ref{thm:ThmA},
	we only need to consider the case $\svdf_0^2\ge 1$.

	\begin{lemma}
		\label{lem:PhiNullEV2}
		Assume $\sec_{N} \le \sigma \le \sec_{M}$ for some $\sigma > 0$.
		Further, assume $\tr(\sind)\ge 0$ and $\lambda_0^2\ge 1$.
		If there exists $\kappa>1$, such that
		$f^{*}\gN < \kappa^{2} \gM$ and
		\[
			\|\A\|^{2} \le \frac{\kappa^{2} \sigma}{\kappa^{4}-1} \tr(\sTensor) \,,
		\]
		then $\Psi_{\svdf_{0}^{2}}$ satisfies the null-eigenvector condition.
	\end{lemma}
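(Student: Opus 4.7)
The plan is to evaluate $\Psi_{\svdf_{0}^{2}}(\vartheta)(v,v)$ termwise and show that each contribution is either manifestly non-negative or can be absorbed by the positive $\tr(\sind)$-term coming from Corollary \ref{cor:Psi}. I would extend $v$ to a $\gind$-orthonormal frame $\{e_{1},\dotsc,e_{l}=v,\dotsc,e_{m}\}$ and set $c\coloneqq \svdf_{0}^{2}$. The null-eigenvector property of $v$ combined with the symmetry of $\vartheta$ immediately kills the two Ricci terms in $\Psi_{c}(\vartheta)(v,v)$, so only the second fundamental form term and the curvature term survive.

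For the second fundamental form term, $c=\svdf_{0}^{2}\ge\svdf_{m}^{2}(x)$ everywhere on $M$, so Lemma \ref{lem:AEst} applies, yielding
\[
-2\sum_{k=1}^{m}\left(\sMN-\tfrac{1-c}{1+c}\gMN\right)\!\bigl(\A(e_{k},v),\A(e_{k},v)\bigr) \;\ge\; -4\,\tfrac{c-1}{1+c}\,\|\A\|^{2}.
\]
This is the only potentially negative contribution, since $c\ge 1$. For the curvature term I would apply Corollary \ref{cor:Psi} and multiply by $-\tfrac{2}{1+c}$. The tensor $\Phi_{c}=\Phi_{\svdf_{0}^{2}}$ is non-negative definite by the very definition of $\svdf_{0}$; together with $\sec_{N}\le\sigma\le\sec_{M}$ (which also gives $\Ric_{M}\ge(m-1)\sigma\gM$), $\tr(\sind)\ge 0$, $c\ge 1$, and $m\ge 2$, a direct sign check shows that five of the six summands on the right-hand side of Corollary \ref{cor:Psi} contribute non-negatively to $\Psi_{c}(\vartheta)(v,v)$. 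The remaining summand, coming from the $\tr(\sind)$-term, yields exactly
\[
\frac{4\,c\,\sigma}{(1+c)^{2}}\,\tr(\sind).
\]

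The decisive step is to absorb the SFF-contribution into this trace-term. For $c>1$, the function $x\mapsto x/(x^{2}-1)$ is strictly decreasing on $(1,\infty)$; the strict hypothesis $f^{*}\gN<\kappa^{2}\gM$ together with compactness of $M$ forces $c=\svdf_{0}^{2}<\kappa^{2}$, so
\[
\|\A\|^{2} \;\le\; \frac{\kappa^{2}\sigma}{\kappa^{4}-1}\,\tr(\sind) \;\le\; \frac{c\,\sigma}{c^{2}-1}\,\tr(\sind),
\]
which is precisely what is needed for $\tfrac{4c\sigma}{(1+c)^{2}}\tr(\sind)$ to dominate $4\tfrac{c-1}{1+c}\|\A\|^{2}$. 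In the borderline case $c=1$ the factor $\tfrac{c-1}{1+c}$ vanishes, so the SFF-term drops out altogether and the hypothesis on $\|\A\|^{2}$ is not needed at this point. Either way, $\Psi_{\svdf_{0}^{2}}(\vartheta)(v,v)\ge 0$.

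The main technical obstacle will be the careful bookkeeping of signs for the six summands in Corollary \ref{cor:Psi}; conceptually, however, the heart of the argument is the monotonicity observation, which is what pins down the precise form $\kappa^{2}/(\kappa^{4}-1)$ of the bound on $\|\A\|^{2}$ in the hypothesis.
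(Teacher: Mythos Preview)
Your proposal is correct and follows essentially the same route as the paper's proof: both arguments discard the $\vartheta(\Ric v,v)$ terms via the null-eigenvector hypothesis, bound the second-fundamental-form contribution using Lemma~\ref{lem:AEst}, apply Corollary~\ref{cor:Psi} to isolate the $\tfrac{4c\sigma}{(1+c)^{2}}\tr(\sind)$ term while checking the remaining summands have the right sign, and then use the monotonicity of $x\mapsto x/(x^{2}-1)$ together with $\svdf_{0}^{2}<\kappa^{2}$ to absorb the SFF term, treating the borderline case $\svdf_{0}^{2}=1$ separately.
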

	
	\begin{proof}
		Let
		$v$ be a null-eigenvector of the positive semi-definite tensor $\vartheta$
		and extend it to a local $\gind$-orthonormal frame $\{e_1,\dotsc,e_l=v,\dotsc,e_m\}$.
		Note that $\Phi_{\svdf_0^2}(v,v)\ge 0$ by construction
		and consequently
		\[
			\tr(\Phi_{\svdf_0^2}) - \Phi_{\svdf_0^2}(v,v) = \sum_{k\ne l} \underbrace{\Phi_{\svdf_0^2}(e_k,e_k)}_{\ge 0} \ge 0 \,.
		\]
		From the curvature assumptions, lemma \ref{lem:AEst} and corollary \ref{cor:Psi}
		we obtain 
		\begin{align*}
			\Psi_{\svdf_0^2}(\vartheta)(v,v) &\stackrel{\text{Lem.\ \ref{lem:AEst}}}{\ge} -4 \frac{\svdf_0^2-1}{1+\svdf_0^2} \sum_{k=1}^m \| \A(e_k,v) \|^2 \\
					&\qquad \qquad - \frac{4}{1+\svdf_0^{2}} \sum_{k=1}^{m}\big( f^{*}\RN(e_{k},v,e_{k},v) - \svdf_0^{2}\RM(e_{k},v,e_{k},v) \big) \\
				& \stackrel{\substack{\hphantom{Lem.\ \ref{lem:AEst}}\\\svdf_0^2\ge 1\\\text{Cor.\ \ref{cor:Psi}}}}{\ge} - \frac{4}{1+\svdf_0^{2}}\left\{ (\svdf_0^{2}-1) \|\A\|^{2} - \frac{\svdf_0^{2}}{1+\svdf_0^{2}} \sigma \tr(\sind) \right\} \,.
		\end{align*}
		If $\lambda_0^2=1$, then
		\[
			\Psi_{\svdf_0^2}(\vartheta)(v,v) \ge \sigma \tr(\sind) \ge 0 \,.
		\]
		so that the null-eigenvector condition is satisfied. If $\lambda_0^2>1$,
		also using $\svdf_{0}^{2} < \kappa^{2}$, it is
		\[
			\frac{\kappa^{2}}{\kappa^{4}-1} < \frac{\svdf_{0}^{2}}{\svdf_{0}^{4}-1}
		\]
		and accordingly
		\begin{align*}
			\Psi_{\svdf_0^2}(\vartheta)(v,v) & \ge - \frac{4}{1+\svdf_0^2}\left\{ (\svdf_0^{2}-1) \|\A\|^{2} - \frac{\svdf_0^{2}}{1+\svdf_0^{2}} \sigma \tr(\sTensor) \right\} \\
				& = - 4 \frac{\svdf_0^2-1}{1+\svdf_0^{2}}\left\{ \|\A\|^{2} - \frac{\svdf_0^{2}}{\svdf_0^4-1} \sigma \tr(\sTensor) \right\} \\
				& \ge - 4 \frac{\svdf_0^2-1}{1+\svdf_0^2} \left\{ \frac{\kappa^2}{\kappa^4-1} - \frac{\svdf_0^2}{\svdf_0^4-1} \right\} \sigma \tr(\sind) \\
				&\ge 0 \,,
		\end{align*}
		so that the null-eigenvector condition is satisfied.
	\end{proof}
	
	Let us now finish the proof of theorem \ref{thm:ThmA}.	
	By construction, the non-negative definite tensor
	$\Phi_{\svdf_0^2}$ admits a null-eigenvector at some point.
	By lemma \ref{lem:PhiNullEV2}, we can apply the strong
	maximum principle (see theorem \ref{thm:SMP})
	to conclude that
	$\Phi_{\svdf_{0}^{2}}$ admits a null-eigenvector everywhere on $M$,
	i.\,e.\ $\Phi_{\svdf_0^2}$ attains its minimal value at every point on $M$.
	Thus, we may apply the second derivative test criterion to $\Phi_{\svdf_0^2}$ at
	an arbitrary point $x\in M$. At $x$ consider a basis $\{e_1,\dotsc,e_m\}$,
	orthonormal with respect to $\gind$ consisting of eigenvectors of $\Phi_{\svdf_0^2}$,
	such that $e_m$
	is a null-eigenvector of $\Phi_{\svdf_0^2}$ and $\svdf_m^2(x)=\svdf_0^2$.
	From lemma \ref{lem:AEst} and corollary \ref{cor:Psi}
	we conclude
	\begin{align}
		\nonumber 0 &\stackrel{\text{min}}{\le} \big( \Delta \Phi_{\svdf_{0}^{2}} \big)(e_{m},e_{m}) \\
			\nonumber & \stackrel{\text{\phantom{min}}}{\le} \underbrace{\frac{4}{1+\svdf_{0}^{2}}}_{> 0} \Biggl( \underbrace{ (\svdf_{0}^{2}-1) \|\A\|^{2} - \frac{\svdf_{0}^{2}}{1+\svdf_{0}^{2}}\sigma \tr(\sTensor)}_{\le 0} \Biggr) \\
				\nonumber &\quad - \underbrace{\frac{4}{1+\svdf_0^2}}_{> 0} \sum_{k\ne m} \underbrace{f^{*}\gN(e_{k},e_{k})}_{\ge 0} \big( \underbrace{\sigma-\sec_{N}(\dd f(e_{k})\wedge \dd f(e_{m}))}_{\ge 0}\big) \\
				\nonumber &\quad - \underbrace{\frac{4\svdf_{0}^{2}}{(1+\svdf_0^2)^2}}_{\ge 0} \sum_{k\ne m} \underbrace{\Phi_{\svdf_0^2}(e_m,e_m)}_{\ge 0} \big( \underbrace{\sec_{M}(e_{k}\wedge e_{m}) - \sigma}_{\ge 0}\big) \\
				\nonumber &\quad - \underbrace{\frac{4\svdf_{0}^{2}}{(1+\svdf_{0}^{2})^{2}}}_{\ge 0}\Big( \underbrace{\Ric_{M}(e_{m},e_{m}) - (m-1)\sigma \gM(e_{m},e_{m})}_{\ge 0}\Big) \\
				\nonumber &\quad + \underbrace{\frac{1-\svdf_0^2}{1+\svdf_0^2}}_{\le 0} \sigma \underbrace{\Phi_{\svdf_0^2}(e_m,e_m)}_{\ge 0} \Bigl( \underbrace{\tr(\Phi_{\svdf_0^2}) - \Phi_{\svdf_0^2}(e_m,e_m)}_{\ge 0} \Bigr) \\
				\label{eq:LapPhi} &\quad - \underbrace{\frac{4\svdf_0^2}{(1+\svdf_0^2)^2}}_{> 0} \sigma \biggl( \underbrace{(m-2) \Phi_{\svdf_0^2}(e_m,e_m)}_{\ge 0} \underbrace{-\frac{1-\svdf_0^2}{1+\svdf_0^2}}_{\ge 0} \biggr) = 0 \,.
	\end{align}
	Since all terms have the same sign, they have to vanish independently.
	Using $\sigma>0$ and $\lambda_0^2\ge 1$, the second line and the
	assumptions on $\|\A\|^2$ imply
	\begin{align*}
		0 &= \bigl( \svdf_0^2-1 \bigr) \|\A\|^2 - \frac{\svdf_0^2}{1+\svdf_0^2} \sigma \tr(\sind) \\
			& \le \left( \bigl( \svdf_0^2-1 \bigr) \frac{\kappa^2}{\kappa^4-1} - \frac{\svdf_0^2}{1+\svdf_0^2} \right) \sigma \tr(\sind) \stackrel{\svdf_0^2<\kappa^2}{\le} 0 \,.
	\end{align*}
	Since $\sigma>0$, this yields $\tr(\sind)=0$. From
	\[
		0 = - \underbrace{\frac{4\svdf_0^2}{(1+\svdf_0^2)^2}}_{> 0} \sigma \biggl( \underbrace{(m-2) \Phi_{\svdf_0^2}(e_m,e_m)}_{\ge 0} \underbrace{-\frac{1-\svdf_0^2}{1+\svdf_0^2}}_{\ge 0} \biggr)
	\]
	we infer $\svdf_0^2=1$.
	Consequently, $\tr(\sind)=0$
	and $\svdf_1^2(x) \le \dotsb \le \svdf_m^2(x)=\lambda_0^2 = 1$ force
	all singular values to be equal to one at any point $x\in M$, 
	$\svdf_1^2(x) = \dotsb = \svdf_m^2(x) = 1$. Now the assumption on the second
	fundamental form reads
	\[
		\|\A\|^2 \le \frac{\kappa^2 \sigma}{\kappa^4-1} \tr(\sind) = 0 \,,
	\]
	so that $\Gamma(f)$ is totally geodesic. Further, it follows that $\gind=2\gM$
	and the second fundamental form also satisfies the equation
	\[
		0 = \A = \A_I \oplus \A_f \,,
	\]
	where $\A_I$ is the second fundamental form of the map
	$I:(M,\gind)\to (M,\gM)$, and $\A_f$ is the second fundamental form
	of the map $f:(M,\gind)\to (N,\gN)$. Consequently, $\A_f=0$, so that
	$f$ is a totally geodesic isometric immersion. \\

	Since all singular values are the same and all vectors are null-eigenvectors
	of $\Phi_{\svdf_0^2}$, we can evaluate $0 = (\Delta \Phi_{\svdf_0^2})(v,v)$
	for an arbitrary vector $v$. Then 
	the third line in \eqref{eq:LapPhi} yields $\sec_N=\sigma$ on $\dd f(TM)$, and
	the fifth line implies $\Ric_M=\sigma\gM$. In view of the curvature
	assumptions, the latter means $\sec_M=\sigma$.
	Thus, the claim of the theorem follows. \hfill \qed
		
	\subsection{Proof of Corollary \ref{cor:CorB}}
	
	Using the assumptions on the dimensions and the notation
	from section \ref{sec:SVD}, we have $r=r(x)=\rank(\dd f) \le \min\{m,n\}=n$
	and $\svdf_j(x) = 0$ for $j=1,\dotsc,m-r$. Consequently,
	\begin{align*}
		\tr(\sind) &= \underbrace{\sum_{j=1}^{m-r}1}_{\ge m-n} + \underbrace{\sum_{j=m-r+1}^m \frac{1-\svdf_{n-m+j}^2(x)}{1+\svdf_{n-m+j}^2(x)}}_{>-r} > m-n-r \ge m-2n \ge 0 \,,
	\end{align*}
	so that condition \eqref{eq:trCon} of theorem \ref{thm:ThmA} is fulfilled.
	Condition \eqref{eq:sCondition4} is satisfied
	by assumption, so that we can draw the conclusions of theorem \ref{thm:ThmA}.
	Further, since the above inequality for $\tr(\sind)$ is strict,
	the map $f:M\to N$ must be constant and the claim follows. \hfill \qed

	\section{Discussion}
	\label{sec:Discussion}
	
	We conclude by giving some remarks on the assumptions made in the theorem.
	
	\begin{remark}
		\begin{enumerate}
			\item Since $M$ is compact, a constant $\kappa^{2}>1$ satisfying
				the inequality $f^{*}\gN < \kappa^{2} \gM$ always exists. 
			\item If $\dim M\ge 3$, the condition on the trace
				\eqref{eq:trCon} is strictly weaker than the area-decreasing condition. 
				In view of Eq.\ \eqref{eq:sCondition4}, this is compensated by an additional requirement on the second fundamental form.
		\end{enumerate}
	\end{remark}
	
	\begin{remark}
		In some situations, we can exclude case \ref{it:thmAii} of the theorem. For example,
		if $\dim M > \dim N$, a map $f$ satisfying the assumptions of theorem \ref{thm:ThmA}
		cannot be an isometric immersion, so that $f$ necessarily has to be constant.
	\end{remark}
	
	To draw the conclusion of theorem \ref{thm:ThmA}, the assumptions
	can be weakened in various situations. Below, we will always assume
	the curvature conditions of theorem \ref{thm:ThmA} to hold.
	
	\begin{remark}[Low dimensions]
		In low dimensions, one can remove the assumption on the second fundamental form.
		\begin{enumerate}
		\item $\dim N=1$. In this case, any smooth minimal map $f:M\to N$ is constant \cite[Theorem C]{SHS13}.
		\item $\dim M=2$. Here, the condition $\tr(\sind)\ge 0$ means that $f$ is a weakly area-decreasing map. 
			Then \cite[Theorem B]{SHS13} implies that $f$ is either constant, or it is an isometric immersion on a non-empty, closed subset $D$ of $M$ and
			strictly area-decreasing on the complement of $D$. 
		\item $\dim M=\dim N=2$. Let us  consider the Jacobian of the projection map $\pi_M : \Gamma(f)\to M$,
			which may be expressed as $v\coloneqq\star\Omega_{\gind}$, where $\Omega_{\gind}$ is the
			volume form on $\Gamma(f)$ induced by the metric $\gind$ and $\star:\Omega^k(M)\to\Omega^{2-k}(M)$
			is the Hodge star with respect to the induced metric $\gind$.
			The differential equation satisfied by $v$
			is essentially calculated in \cite{Wang02}*{Proposition 3.1} and is given by
			\begin{align*}
				\Delta \ln v &= -\|\A\|^2 - \svdf_1^2 \Bigl\{ (\A_{11}^1)^2 + (\A_{12}^1)^2 \Bigr\} - \svdf_2^2 \Bigl\{ (\A_{12}^2)^2 + (\A_{22}^2)^2 \Bigr\} \\
					& \quad - 2 \svdf_1\svdf_2 \Bigl\{ \A_{11}^2 \A_{21}^1 + \A_{12}^2 \A_{22}^1 \Bigr\} \\
					& \quad - \frac{1}{(1+\svdf_1^2)(1+\svdf_2^2)} \Bigl\{ (\svdf_1^2+\svdf_2^2) \sec_M - 2 \svdf_1^2\svdf_2^2 \sec_N \Bigr\} \,.
			\end{align*}
			From the minimality of $\Gamma(f)$ we also get 
			$\A_{11}^2=-\A_{22}^2$ and $\A_{22}^1=-\A_{11}^1$, so that we estimate
			\begin{align*}
				\Delta \ln v &\le -\|\A\|^2 - \svdf_1^2 \Bigl\{ (\A_{11}^1)^2 + (\A_{12}^1)^2 \Bigr\} - \svdf_2^2 \Bigl\{ (\A_{12}^2)^2 + (\A_{22}^2)^2 \Bigr\} \\
					& \quad + 2 |\svdf_1\svdf_2| \Bigl\{ \bigl|\A_{22}^2 \A_{21}^1\bigr| + \bigl|\A_{12}^2 \A_{11}^1 \bigr| \Bigr\} \\
					& \quad - \frac{1}{(1+\svdf_1^2)(1+\svdf_2^2)} \Bigl\{ (\svdf_1^2+\svdf_2^2) \sec_M - 2 \svdf_1^2\svdf_2^2 \sec_N \Bigr\} \\
					&= -\|\A\|^2 - \Bigl( \bigl|\svdf_1\A_{11}^1\bigr| - \bigl|\svdf_2\A_{12}^2 \bigr| \Bigr)^2 - \Bigl( \bigl|\svdf_1\A_{21}^1 \bigr| - \bigl|\svdf_2\A_{22}^2\bigr| \Bigr)^2 \\
					&\quad - \frac{1}{(1+\svdf_1^2)(1+\svdf_2^2)}\Bigl\{ (\svdf_1^2+\svdf_2^2)(\sec_M-\sigma) + 2 \svdf_1^2\svdf_2^2 (\sigma-\sec_N) \\
						& \qquad \qquad \qquad \qquad \qquad + \sigma(\svdf_1^2+\svdf_2^2-2\svdf_1^2\svdf_2^2) \Bigr\} \,.
			\end{align*}
			The assumption $\tr(\sind)\ge 0$ is equivalent to $\svdf_1^2\svdf_2^2\le 1$,
			which in turn implies the estimate
			\[
				\svdf_1^2+\svdf_2^2-2\svdf_1^2\svdf_2^2 \ge \svdf_1^2+\svdf_2^2 - 2 |\svdf_1\svdf_2| = \bigl( |\svdf_1|-|\svdf_2| \bigr)^2 \ge 0
			\]
			for the third curvature term.
			Therefore, all terms in the above differential equation are non-negative.
			By the maximum principle, we conclude that $\ln v$ must be constant on $M$. 
			Evaluating $0=\Delta\ln v$, we see that $\A=0$ everywhere.
			The estimate for the curvature term yields $\svdf_1^2=\svdf_2^2$,
			and then $\svdf_1^2=\svdf_2^2=0$ or $\svdf_1^2=\svdf_2^2=1$ everywhere.
			In the latter case, the other curvature terms imply $\sec_M=\sigma$ and
			$\sec_N=\sigma$ on $\dd f(TM)$.
		\end{enumerate}
	\end{remark}
	
	\begin{remark}[Weakly length-decreasing maps]
		\label{rem:WeaklyLengthDecreasing}
		By examining lemma \ref{lem:StrictLengthDecr} and the proof
		of theorem \ref{thm:ThmA}, 
		we note that if $\svdf_0^2\le 1$
		one does not need to impose a condition on 
		the second fundamental form.
		Thus, in the case at hand, we obtain an alternative proof of \cite[Theorem A]{SHS13}.
	\end{remark}
	
	Let us remark that there exists an abundance 
	of weakly length-decreasing minimal maps 
	which are not totally geodesic. In particular,
	these maps satisfy condition \eqref{eq:trCon}.

	\begin{example}[Holomorphic Maps, {\cite[Example 2(a)]{SHS13}}]
		For complex manifolds $M$ and $N$, the graph of a holomorphic map $f:M\to N$ is
		automatically minimal. By a result due to Ahlfors \cite{Ahlfors38} and its extension by Yau \cite{Yau78},
		for every holomorphic map $f:M\to N$ between a complete K\"{a}hler manifold
		$M$ with Ricci curvature bounded from below by a negative constant $-a$ and $N$
		a Hermitian manifold with holomorphic bisectional curvature bounded
		from above by a negative constant $-b$, it is $f^*\gN\le\frac{a}{b}\gM$.
	\end{example}

\begin{bibdiv}  
\begin{biblist}

\bib{Ahlfors38}{article}{
   author={Ahlfors, Lars V.},
   title={An extension of Schwarz's lemma},
   journal={Trans. Amer. Math. Soc.},
   volume={43},
   date={1938},
   number={3},
   pages={359--364},
   issn={0002-9947},
}

\bib{EH90}{article}{
	author={Ecker, Klaus},
   author={Huisken, Gerhard},
   title={A Bernstein result for minimal graphs of controlled growth},
   journal={J. Differential Geom.},
   volume={31},
   date={1990},
   number={2},
   pages={397--400},
   issn={0022-040X},
}

\bib{FC80}{article}{
   author={Fischer-Colbrie, D.},
   title={Some rigidity theorems for minimal submanifolds of the sphere},
   journal={Acta Math.},
   volume={145},
   date={1980},
   number={1-2},
   pages={29--46},
   issn={0001-5962},
}

\bib{Ham82}{article}{
   author={Hamilton, Richard S.},
   title={Three-manifolds with positive Ricci curvature},
   journal={J. Differential Geom.},
   volume={17},
   date={1982},
   number={2},
   pages={255--306},
   issn={0022-040X},
}

\bib{Ham86}{article}{
   author={Hamilton, Richard S.},
   title={Four-manifolds with positive curvature operator},
   journal={J. Differential Geom.},
   volume={24},
   date={1986},
   number={2},
   pages={153--179},
}

\bib{HJW80}{article}{
   author={Hildebrandt, S.},
   author={Jost, J.},
   author={Widman, K.-O.},
   title={Harmonic mappings and minimal submanifolds},
   journal={Invent. Math.},
   volume={62},
   date={1980/81},
   number={2},
   pages={269--298},
   issn={0020-9910},
}

\bib{JX99}{article}{
   author={Jost, J.},
   author={Xin, Y. L.},
   title={Bernstein type theorems for higher codimension},
   journal={Calc. Var. Partial Differential Equations},
   volume={9},
   date={1999},
   number={4},
   pages={277--296},
   issn={0944-2669},
}

\bib{JXY12}{article}{
   author={Jost, J\"urgen},
   author={Xin, Yuanlong},
   author={Yang, Ling},
   title={The regularity of harmonic maps into spheres and applications to
   Bernstein problems},
   journal={J. Differential Geom.},
   volume={90},
   date={2012},
   number={1},
   pages={131--176},
   issn={0022-040X},
}

\bib{JXY13}{article}{
   author={Jost, J.},
   author={Xin, Y. L.},
   author={Yang, Ling},
   title={The Gauss image of entire graphs of higher codimension and
   Bernstein type theorems},
   journal={Calc. Var. Partial Differential Equations},
   volume={47},
   date={2013},
   number={3-4},
   pages={711--737},
   issn={0944-2669},
}

\bib{JXY16}{article}{
   author={Jost, J\"urgen},
   author={Xin, Yuan Long},
   author={Yang, Ling},
   title={The geometry of Grassmannian manifolds and Bernstein-type theorems
   for higher codimension},
   journal={Ann. Sc. Norm. Super. Pisa Cl. Sci. (5)},
   volume={16},
   date={2016},
   number={1},
   pages={1--39},
   issn={0391-173X},
}

\bib{LS10}{article}{
   author={Li, Guanghan},
   author={Salavessa, Isabel M. C.},
   title={Bernstein-Heinz-Chern results in calibrated manifolds},
   journal={Rev. Mat. Iberoam.},
   volume={26},
   date={2010},
   number={2},
   pages={651--692},
   issn={0213-2230},
}

\bib{SHS13}{article}{
  author = {Savas-Halilaj, Andreas},
  author = {Smoczyk, Knut},
  title = {Bernstein theorems for length and area decreasing minimal maps},
  journal = {Calc. Var. Partial Differential Equations},
  year = {2014},
  volume = {50},
  pages = {549--577},
  number = {3-4},
  doi = {\doi{10.1007/s00526-013-0646-0}},
  issn = {0944-2669},
}
		
\bib{Sch93}{article}{
   author={Schoen, Richard M.},
   title={The role of harmonic mappings in rigidity and deformation
   problems},
   conference={
      title={Complex geometry},
      address={Osaka},
      date={1990},
   },
   book={
      series={Lecture Notes in Pure and Appl. Math.},
      volume={143},
      publisher={Dekker, New York},
   },
   date={1993},
   pages={179--200},
}

\bib{SWX06}{article}{
   author={Smoczyk, Knut},
   author={Wang, Guofang},
   author={Xin, Y. L.},
   title={Bernstein type theorems with flat normal bundle},
   journal={Calc. Var. Partial Differential Equations},
   volume={26},
   date={2006},
   number={1},
   pages={57--67},
   issn={0944-2669},
}

\bib{Wang02}{article}{
   author={Wang, Mu-Tao},
   title={Long-time existence and convergence of graphic mean curvature flow
   in arbitrary codimension},
   journal={Invent. Math.},
   volume={148},
   date={2002},
   number={3},
   pages={525--543},
   issn={0020-9910},
}

\bib{Wang03}{article}{
   author={Wang, Mu-Tao},
   title={On graphic Bernstein type results in higher codimension},
   journal={Trans. Amer. Math. Soc.},
   volume={355},
   date={2003},
   number={1},
   pages={265--271},
   issn={0002-9947},
}

\bib{Yau78}{article}{
   author={Yau, Shing Tung},
   title={A general Schwarz lemma for K\"ahler manifolds},
   journal={Amer. J. Math.},
   volume={100},
   date={1978},
   number={1},
   pages={197--203},
   issn={0002-9327},
}

\end{biblist}
\end{bibdiv}

\end{document}